\newtheorem{theorem}{Theorem}[section]
\theoremstyle{plain}
\newtheorem{proposition}[theorem]{Proposition}
\newtheorem{lemma}[theorem]{Lemma}
\newtheorem{algorithm}[theorem]{Algorithm}
\newtheorem{ass}[theorem]{Assumption}
\newtheorem{example}[theorem]{Example}
\newcommand{\re}{\mathbb{R}}
\newcommand{\N}{\mathbb{N}}
\newcommand{\st}{\mathit{s.t.}}
\newcommand{\mc}[1]{\mathcal{#1}}
\newcommand{\be}{\begin{equation}}
	\newcommand{\ee}{\end{equation}}
\newcommand{\beq}{\begin{equation}}
	\newcommand{\eeq}{\end{equation}}
\newcommand{\baray}{\begin{array}}
	\newcommand{\earay}{\end{array}}
\newcommand{\bbm}{\begin{bmatrix}}
	\newcommand{\ebm}{\end{bmatrix}}
\newcommand{\bit}{\begin{itemize}}
	\newcommand{\eit}{\end{itemize}}
\newcommand{\bdes}{\begin{description}}
	\newcommand{\edes}{\end{description}}
\numberwithin{equation}{section}
\begin{document}
	
	\title[GSIPs with polyhedral uncertainty]{A global approach for generalized 
		semi-infinte programs with polyhedral parameter sets}

	\author[Xaomeng Hu]{Xiaomeng~Hu}
	
	\author[Jiawang Nie]{Jiawang~Nie}
	\address{Xiaomeng Hu, Jiawang Nie, Department of Mathematics,
		University of California San Diego,
		9500 Gilman Drive, La Jolla, CA, USA, 92093.}
	\email{x8hu@ucsd.edu, njw@math.ucsd.edu}

	\author[Suhan Zhong]{Suhan~Zhong} 
	\address{Suhan Zhong, Department of Mathematics,
		Texas A\&M University, College Station, TX, USA, 77843-3368.}
	\email{suzhong@tamu.edu}

	\subjclass[2020]{90C23, 90C34, 65K05}
	
	\keywords{GSIP, partial Lagrange multiplier expressions, disjunctive optimization,
		relaxation}

	\begin{abstract}
		This paper studies generalized semi-infinite programs (GSIPs) defined
		with polyhedral parameter sets. Assume these GSIPs are given by polynomials.
		We propose a new approach to solve them as a disjunctive program.
		This approach is based on the Karush-Kuhn-Tucker (KKT) conditions of the
		robust constraint and a technique called partial Lagrange multiplier expressions.
		We summarize a semidefinite algorithm and study its convergence properties.
		Numerical experiments are given to show the efficiency of our method.
		In addition, we checked its performance in gemstone cutting and robust control applications.
	\end{abstract}

	\maketitle

\section{Introduction}\label{sec:Intro}
A generalized semi-infinite program (GSIP) is a finite-dimensional optimization
problem with infinitely many constraints parameterized by finitely many variables.
It takes the form
\be \label{primal-GSIP}
\left\{
\begin{array}{cl}
	\min\limits_{x\in X} & f(x)\\
	\st  &  g(x,u)\geq 0\quad\forall u\in U(x),
\end{array}
\right.
\ee
where $x = (x_1,\ldots, x_n)$ is the vector of decision variables
and $u = (u_1,\ldots, u_p)$ is the vector of parameters.
The $X$ is a given constraining set, $f, g$ are continuous functions,
and $U(x)$ is an explicitly given parameter set, which is usually infinite.
For the special case that $U(x)$ is empty, the robust constraint holds naturally.
When $U(x)=U$ is independent with $x$,
\eqref{primal-GSIP} is reduced to a {\it semi-infinite program} (SIP).

It is very challenging to solve GSIPs in general cases.
In this paper, we focus on GSIPs defined with polyhedral parameter sets.
Assume all defining functions of \eqref{primal-GSIP} are polynomials.
We write
\be \label{U(x)set}
U(x) \coloneqq  \{u\in\re^p\,\vert\, Au \ge b(x)\},
\ee
where $A$ is a constant matrix and $b$ is a vector of polynomials, i.e.,
\[ A = \bbm a_1 & a_2 & \cdots & a_m\ebm^T,\]
\[ b(x) = \bbm b_1(x) & b_2(x) & \cdots & b_m(x)\ebm^T.\]
Such GSIPs serve as a useful framework in many applications such as
robust safe control \cite{wei2022persistently,WehbSip24} and
gemstone-cutting problems \cite{kufer2008semi,nguyen1992computing,winterfeld2008application}.
Polynomial optimization problems have been extensively studied in \cite{Las01,NieBook}.
They can be solved globally by the Moment-SOS relaxation approach. Recently, this technique was applied in
\cite{EggenStein25} for polynomial SIPs and in \cite{HuKlepNie,HuNie23,wang2014semidefinite} for polynomial GSIPs.
For convenience of expression, we assume that $g$ is a scalar polynomial throughout this paper unless specified otherwise.
Specifically, the case where $g$ is a vector-valued polynomial is discussed in \Cref{sc:convconca}.

GSIPs have broad applications in areas such as min-max optimization \cite{PangWu2020},
robust safe control \cite{wei2022persistently,WehbSip24}, 	
optimal design approximation \cite{royset2003adaptive} and
machine learning \cite{sra2012optimization,xu2009robustness}.
Many algorithms have been developed to solve these problems.
Discretization-based methods and reduction-based methods are two common approaches; see \cite{Cerulli22,Hettichdis,Hettichsip,Still2001}.
Specifically, for SIPs, a semidefinite algorithm is given in \cite{wang2014semidefinite},
a primal-dual path following method is given in \cite{OkunoFukushima23},
and an adaptive convexification algorithm is given in \cite{stein2012adaptive}.
For GSIPs, a semidefinite algorithm for polynomial GSIPs is given in \cite{HuNie23},
a bilevel approach is given in \cite{stein2003bi},
and an algorithm based on restriction of the right-hand-side is given in \cite{articleAngelos2015}.

The major challenge for solving the GSIP \eqref{primal-GSIP}
comes from the robust constraint
\begin{equation}\label{eq:robust_cons}
	g(x,u)\ge 0\quad \forall u\in U(x) = \{u\,\vert\, Au\ge b(x)\}.
\end{equation}
A common approach is to reformulate it with the value function of $g$
with respect to $u$. This value function $v(x)$ is defined by
\begin{equation}\label{eq:vf}
	\left\{\begin{array}{rl}
		v(x)\coloneqq \inf\limits_{u\in\re^p} & g(x,u)\\
		\st & Au-b(x)\ge 0.
	\end{array}\right.
\end{equation}
It is clear that \eqref{eq:robust_cons} holds if and only if $v(x)\ge 0$.
When \eqref{eq:vf} is infeasible, $v(x) = +\infty>0$,
so the robust constraint must be satisfied.
When the feasible set of \eqref{eq:vf} is unbounded, it is possible that 
$v(x) = -\infty$, or $v(x)$ is finite but not achievable.	
For instance, if \eqref{eq:vf} is unconstrained and the objective $g(x,u) = (u_1u_2-x)^2+u_1^2$,
then $v(x) = 0$ but it is not achievable for any $x>0$.
We can reformulate \eqref{primal-GSIP} as a disjunctive program
by decomposing $X$ into two disjoint parts dependent on the emptiness of $U(x)$, i.e.,
\begin{equation}\label{eq:vfreform}
	\left\{\begin{array}{cl}
		\min\limits_{x\in X} & f(x)\\
		\st & \{x\,\vert\, U(x)=\emptyset\}\cup
		\{x\,\vert\, v(x)\ge 0,\, U(x)\neq\emptyset\}.\\
	\end{array}
	\right.
\end{equation}
This disjunction also plays a role in \cite{SteinStill02}.
Since $U(x)$ is a polyhedral set,
$\{x\in X\,\vert\,  U(x)\not=\emptyset)\}$
can be explicitly represented by theorem of alternatives \cite[Theorem~1.3.5]{NieBook} using auxiliary variables.
However, since $v(x)$ typically does not have an explicit expression,
$\{x\in X\,\vert\, v(x)\ge 0,\, U(x)\not=\emptyset\}$
is very difficult to characterize in computations.

\subsection{Feasible Extension Methods}
Consider the optimization
\begin{equation}\label{eq:D2intro}
	\left\{\begin{array}{cl}
		\min\limits_{x\in X} & f(x)\\
		\st &  v(x)\ge 0,\, U(x)\not=\emptyset.
	\end{array}
	\right.
\end{equation}
Since $g(x,u)\ge v(x)$ for every feasible $u$, by switching the minimization problem~\eqref{eq:vf} into a maximization one, we can obtain the conservative relaxation of \eqref{eq:D2intro}:
\begin{equation}\label{eq:projref}
	\left\{\begin{array}{cl}
		\min\limits_{x\in X} & f(x)\\
		\st & \sup\,\{ g(x,u)\,\vert\, u\in U(x)\}\ge 0,
	\end{array}
	\right.
\end{equation}
which is equivalent to the following polynomial optimization
\begin{equation}\label{eq:genrel}
	\left\{\begin{array}{cl}
		\min\limits_{(x,u)} & f(x)\\
		\st & g(x,u)\ge 0,\\
		& x\in X,\, u\in U(x).
	\end{array}
	\right.
\end{equation}
The problem~\eqref{eq:projref} is called a projection reformulation of
\eqref{eq:genrel}; see \cite[Exercise 1.3.6]{SteinBook24} for more details
about the equivalence. 
The problem \eqref{eq:genrel} can be solved by Moment-SOS relaxations; see \Cref{ssc:po} 
for a brief introduction to this method.
Suppose $(\hat{x}, \hat{u})$ is an optimizer of \eqref{eq:genrel}.
Then \eqref{eq:genrel} is a tight relaxation of \eqref{eq:D2intro}
if and only if $v(\hat{x})\ge 0$. When the relaxation is not tight,
the exchange method \cite{BhattachCP76,Cerulli22} is a classical strategy
to refine the feasible set of \eqref{eq:genrel}.
Specifically, it augments \eqref{eq:genrel} with a cutting
constraint that is violated at $\hat{x}$,
thereby ensuring this point is no longer feasible.
Given $v(\hat{x})<0$, there exists a parameter $\bar{u}\in U(\hat{x})$
such that $g(\hat{x}, \bar{u})\le v(\hat{x})< 0$.
For SIPs (where $U(x)= U$), a valid cutting constraint can be chosen as
\[ g(x,\bar{u})\ge 0. \]
This constraint is satisfied for every feasible point of \eqref{eq:D2equiv}
because $\bar{u}\in U$.
For GSIPs, however, since the parameter set $U(x)$ is decision-dependent,
$\bar{u}\in U(\hat{x})$ may not belong to $U(x)$ for other
choices of $x$.
Consequently, enforcing $g(x,\bar{u})\ge 0$ may violate the relaxation guarantee
and possibly exclude the true optimizer of \eqref{eq:D2equiv}.

To address this problem, we introduce the \emph{feasible extension} method.
For a given pair $(\hat{x}, \bar{u})$, a vector-valued polynomial function
$q: \re^n\to \re^p$ is called a feasible extension of $\bar{u}$ at $\hat{x}$
if it satisfies
\[
q(\hat{x}) = \bar{u}\quad\mbox{and}\quad q(x)\in U(x)\,
\,\mbox{if $x\in X,\, U(x)\not=\emptyset$} .
\]
We remark that such a $q$ can be seen as a polynomial selection
function of the set-valued mapping $U$. By Michael's selection theorem \cite[Theorem~9.1.2]{Aubin}, a continuous selection function $q: X\subseteq \re^n\to \re^p$ such that $q(x)\in U(x)$ for every $x\in X$ always exists
if $U$ is inner semi-continuous and has closed convex values.
Suppose such a feasible extension $q$ exists.
A valid cutting constraint for GSIPs can be formulated as
\[
g(x,q(x)) \ge 0.
\]
It is easy to verify that this constraint is violated by $\hat{x}$,
but is satisfied by every feasible point of \eqref{eq:D2intro}.
Therefore, one can solve the updated relaxation to get an optimizer
that is distinct from $(\hat{x}, \hat{u})$. This process can be
repeated infinitely under the existence of feasible extensions.
It generates a sequence of progressively tighter approximations
of the original problem.
This feasible extension method was applied to solve polynomial GSIPs in \cite{HuNie23}.
Under certain assumptions, it produces a convergent sequence of optimizers for \eqref{eq:D2intro}.
However, the convergence rate is usually prohibitively slow starting from the conservative
relaxation \eqref{eq:genrel}. This is because the feasible set of \eqref{eq:genrel}
typically has a larger dimension than that of \eqref{eq:D2intro}.
Therefore, we are motivated to find a more efficient relaxation of \eqref{eq:D2intro}.

\subsection{A Disjunctive KKT Relaxation}
In this paper, we propose a novel disjunctive relaxation of \eqref{eq:D2intro}.
Suppose $U(x)$ is locally bounded on $X$: for every $\hat{x}\in X$,
there exist some neighborhood $C_1$ of $\hat{x}$ and
a bounded set $C_2$ such that $U(x)\subseteq C_2$ for all $x\in C_1$.
This is a general assumption for GSIPs (see \cite[Chapters 3\&4]{stein2012adaptive}).
Then for every feasible point of \eqref{eq:D2intro},
$v(x)$ is finite and \eqref{eq:vf} has a nonempty optimizer set
\begin{equation}\label{eq:S(x)}
	S(x)\,\coloneqq \{u\in U(x)\,\vert\, v(x) = g(x,u)\}.
\end{equation}
Since all constraints of \eqref{eq:vf} are linear in $u$,
every $u\in S(x)$ satisfies the Karush-Kuhn-Tucker (KKT) conditions:
there exists a vector of Lagrange multipliers
$\lambda=(\lambda_1,\ldots, \lambda_m)$ such that
\begin{equation}\label{eq:KKTcondi}
	\left\{\begin{array}{l}
		\nabla_u g(x,u)-A^T\lambda = 0,\\
		0\le Au-b(x)\perp \lambda\ge 0,
	\end{array}
	\right.
\end{equation}
where $\perp$ denotes the perpendicular relation.
Define the KKT set of parameters associated with $x$:
\[ \mc{K}(x)\coloneqq \{u\in\re^p\,\vert\, \exists \lambda\,\,
\mbox{s.t. $(u,\lambda)$ satisfies \eqref{eq:KKTcondi}}\}.\]
Since $S(x)\subseteq \mc{K}(x)\subseteq U(x)$,
the following KKT relaxation of \eqref{eq:D2intro}
\begin{equation}\label{eq:KKTrel}
	\left\{\begin{array}{cl}
		\min\limits_{(x,u)} & f(x)\\
		\st & g(x,u) \ge 0,\\
		& x\in X,\,u\in \mc{K}(x)
	\end{array}
	\right.
\end{equation}
is typically tighter than \eqref{eq:genrel}.
This relaxation can be applied to solve GSIPs in \cite{stein2003bi},
which is originated from the well-known MPEC relaxation of bilevel problems.
Here $\mc{K}(x)$ is defined in terms of Lagrange multipliers $\lambda_i$.
If it is replaced by the full KKT system \eqref{eq:KKTcondi},
then these extra variables $\lambda_i$ will increase the computational cost for solving \eqref{eq:KKTrel}.
To improve computational efficiency, we propose to decompose $\mc{K}(x)$ into structured components
such that each component admits a simple representation solely in terms of the original variables.

The decomposition of $\mc{K}(x)$ can be obtained by using
a technique called partial Lagrange multiplier expressions.
For convenience, denote
\[
[m]\coloneqq \{1,\ldots,m\}\quad \mbox{and}\quad
r = \mbox{rank}(A)\le\min\{m,p\}.
\]
For each $J = \{j_1,\ldots, j_r\}\subseteq [m]$, we write
\begin{equation}\label{eq:AJbJ}
	A_J = \bbm a_{j_1} & \cdots & a_{j_r}\ebm^T,\quad
	b_J(x) = \bbm b_{j_1}(x) & \cdots b_{j_r}(x)\ebm^T.
\end{equation}
Define the index set
\begin{equation}\label{eq:P}
	\mc{P}\,\coloneqq\,
	\{J = \{j_1,\ldots, j_r\}\subseteq [m]\,\vert\, \mbox{rank}(A_J) = r\}.
\end{equation}
For a given pair $(x,u)$, if the KKT system \eqref{eq:KKTcondi} is feasible,
then it has a solution $\lambda$ that has at most $r$ nonzero entries.
Conversely, for every feasible pair $(x,u)$ of \eqref{eq:KKTcondi},
there exists an index set $J\in\mc{P}$ such that
\begin{equation}\label{eq:KKTJ}
	\exists\,\, \lambda_J = (\lambda_j)_{j\in J}\quad \st\quad
	\left\{\begin{array}{l}
		\nabla_u g(x,u) - A_J^T\lambda_J = 0,\\
		0\le [A_Ju-b_J(x)]\perp \lambda_J\ge 0.
	\end{array}
	\right.
\end{equation}
This conclusion is implied by Carath\'{e}odory's theorem.
Given $J\in \mc{P}$, since $\mbox{rank}(A_J) = r$ and $A_J^T$ has $r$ columns,
$A_J^T$ is full column rank and $A_JA_J^T$ is invertible.
If $\lambda_J$ is feasible for \eqref{eq:KKTJ}, by the KKT equation,
we must have
\[ \lambda_J = \lambda_J(x,u) = (A_JA_J^{T})^{-1}A_J\nabla_u g(x,u). \]
Such a $\lambda_J(x,u)$ is called a partial Lagrange multiplier
expression of \eqref{eq:vf} with respect to $J$. It determines the KKT subset
\[
\mc{K}_J(x) = \left\{u\in\re^p\left\vert
\begin{array}{c}
	\nabla_u g(x,u)-A_J^T\lambda_J(x,u) = 0,\\
	0\le [A_Ju-b_J(x)] \perp \lambda_J(x,u)\ge 0
\end{array}\right.\right\}.
\]
It is clear that $\mc{K}_J(x)\subseteq \mc{K}(x)$.
In addition, we show in \Cref{thm:kktdcp} that
\[
\mc{K}(x) = \bigcup_{J\in \mc{P}}\mc{K}_J(x).\]
Based on the above decomposition, \eqref{eq:KKTrel} can be reformulated as a
disjunctive program, where the $J$th branch problem is defined as
\[
(P_J):\, \left\{\begin{array}{cl}
	\min\limits_{(x,u)} & f(x)\\
	\st & g(x,u)\ge 0,\\
	& x\in X,\, u\in \mc{K}_J(x).
\end{array}
\right.
\]
Each $(P_J)$ is a polynomial optimization problem,
which can be solved globally by Moment-SOS relaxations.
By applying feasible extension methods to $(P_J)$,
we can get a feasible point or a convergent sequence to a feasible point of
\eqref{eq:D2intro}. The asymptotic convergence is studied in \Cref{thm:asym_conv}.
For computed feasible points, we give convenient conditions to verify their
global/local optimality.
Numerical experiments are given to show that it typically takes fewer iterations for
the feasible extension method to converge to a feasible candidate solution of \eqref{primal-GSIP}
from the KKT relaxation \eqref{eq:KKTrel} compared to \eqref{eq:genrel}.

Our main contributions can be summarized as follows.
\begin{itemize}
	\item
	We propose a novel approach to solve polynomial GSIPs with polyhedral parameter sets.
	This approach addresses the challenge of robust constraints by using
	KKT relaxations and feasible extension methods.
	With the technique of partial Lagrange multiplier expressions,
	our approach reformulates the KKT relaxation of the robust constraint
	as a disjunctive program of polynomial optimization problems.

	\item
	We develop a semidefinite algorithm for GSIPs based on this disjunctive KKT relaxation approach.
	For this algorithm, we analyze the verification of the global/local optimality
	for computed points upon finite termination and study its asymptotic convergence properties.
	
	\item
	We present numerical experiments to show the efficiency of our approach.
	In particular, we demonstrate the applicability of our framework to applications in gemstone cutting and robust safe control.
\end{itemize}

The rest of the paper is organized as follows.
In \Cref{sec:pre}, we introduce notation and give a brief review for
polynomial optimization.
In \Cref{sc:relGSIP}, we introduce a novel disjunctive KKT transformation of
GSIPs based on partial Lagrange multiplier expressions.
In \Cref{sc:DP}, we summarize a semidefinite algorithm for solving GSIPs
and study its convergence properties.
In \Cref{sc:convconca}, we extend the proposed framework to GSIPs with multiple
robust constraints.
In \Cref{sc:num}, we present numerical experiments.
In \Cref{sec:applications}, we give two applications of our framework
for gemstone-cutting problems and robust safe control.
Conclusions are summarized in \Cref{sc:con}.

\section{Preliminaries}\label{sec:pre}
The following notation is used throughout the paper.
The symbols $\re,\N$ denote the set of real numbers and nonnegative integers
respectively. The $\mathbb{N}^n$ (resp., $\mathbb{R}^n$) stands for the set
of $n$-dimensional vectors with entries in $\mathbb{N}$ (resp., $\mathbb{R}$).
For $\hat{x}\in\re^n$ and $\epsilon>0$,
$B_{\epsilon}(\hat{x})\coloneqq\{x\in\re^n\,\vert\, \|x-\hat{x}\|\le \epsilon\}$
where $\|\cdot\|$ denotes the Euclidean norm.
For $t\in \re$, $\lceil t\rceil$ denotes the smallest integer that is greater
than or equal to $t$.
For an integer $m>1$, we denote $[m] \coloneqq \{1,\cdots,m\}$.
For a subset $J\subseteq[m]$, we use $|J|$ to denote its cardinality.
The symbol $e$ denotes the vector of all ones.
The $I_n$ denotes the $n$-by-$n$ identity matrix.
A matrix $B\in\re^{n\times n}$ is said to be positive semidefinite,
denoted as $B\succeq 0$, if $x^TB x\ge 0$ for every $x\in\re^n$.
For a function $q$ in $(x,u)$, we use $\nabla q$ to denote its total gradient and
$\nabla_u q$ to denote its partial gradient in $u$.
Let $x = (x_1,\ldots, x_n)$.
The symbol $\re[x]$ denotes the set of all real polynomials and
$\re[x]_d$ denotes its degree-$d$ truncation.
For $f\in\re[x]$, its degree is denoted by $\deg(f)$.
For $h = (h_1,\ldots, h_m)$ with each $h_i\in\re[x]$,
$\deg(h)\coloneqq\max\{\deg(h_1),\ldots, \deg(h_m)\}$.

\subsection{Nonnegative Polynomials}
Let $z = x$ or $(x,u)$ have dimension $l$.
A polynomial $\sigma\in\re[z]$ is said to be a sum-of-squares (SOS) if
$\sigma = \sigma_1^2+\cdots+\sigma_t^2$ for some $\sigma_i\in\re[z]$.
The cone of SOS polynomials is denoted by $\Sigma[z]$.
For a degree $d$, we write $\Sigma[z]_{d}\coloneqq \Sigma[z]\cap \re[z]_{d}$.
Let $h = (h_1, \ldots, h_m)$ with each $h_i\in\re[z]$.
The quadratic module of $h$ is defined as
\[
\mbox{QM}[h]\coloneqq \Sigma[z]+h_1\cdot \Sigma[z]+\cdots +h_m\cdot \Sigma[z].
\]
For an integer $k\ge \lceil \deg(h)/2\rceil$,
the $k$th order truncated quadratic module of $h$ is defined by
\begin{equation}\label{eq:QMk}
	\mbox{QM}[h]_{2k}\coloneqq \Sigma[z]_{2k}+ h_1\cdot\Sigma[z]_{2k-\deg(h_1)}+\cdots+h_{m}\cdot \Sigma[z]_{2k-\deg(h_{m})}.
\end{equation}
Let $Z = \{z\in\re^l\,\vert\, h(z)\ge 0\}$. We use
\[
\mathscr{P}(Z) \coloneqq \{p\in\re[z]\,\vert\, p(z) \ge 0\,(\forall z\in Z)\}
\]
to denote the set of nonnegative polynomials on $Z$.
For each $k\ge \lceil \deg(h)/2\rceil$, the containment relation holds that
\begin{equation}\label{eq:contain}
	\mbox{QM}[h]_{2k}\subseteq \mbox{QM}[h]_{2k+2}\subseteq \cdots\subseteq
	\mbox{QM}[h]\subseteq \mathscr{P}(Z)
\end{equation}
The $\mbox{QM}[h]$ is said to be \emph{archimedean} if there exists $q\in\mbox{QM}[h]$ such that $q\ge 0$ determines a compact set.
Suppose $\mbox{QM}[h]$ is archimedean. Then every polynomial that is positive over $Z$ belongs to $\mbox{QM}[h]$.
This conclusion is often referenced as Putinar's Positivstellensatz \cite{putinar}.

\subsection{Polynomial Optimization}\label{ssc:po}
A polynomial optimization problem is
\begin{equation}\label{eq:polyopt}
	\left\{\begin{array}{cl}
		\min\limits_{z\in \re^l} & f(z)\\
		\st & h_1(z)\ge 0,\ldots, h_m(z)\ge 0,
	\end{array}
	\right.
\end{equation}
where $f$ and each $h_i$ are polynomials.
Let $h = (h_1,\ldots, h_m)$ and denote $Z = \{z\in\re^l\,\vert\, h(z)\ge 0\}$.
A scalar $\gamma$ is less than the optimal value of \eqref{eq:polyopt} if
and only if $f(z)-\gamma\ge 0$ for every $z\in Z$.
In other words, finding the optimal value of \eqref{eq:polyopt} is equivalent
to solving the maximization problem
\begin{equation}\label{eq:maxgamma}
	\left\{\begin{array}{rl}
		\gamma^*\coloneqq \max\limits_{\gamma\in\re} & \gamma\\
		\st & f(x)-\gamma\in \mathscr{P}(Z).
	\end{array}
	\right.
\end{equation}
The nonnegative polynomial cone $\mathscr{P}(Z)$ can be efficiently approximated
by quadratic modules.
For $k\ge k_1\coloneqq \max\{\lceil \deg(h)/2\rceil, \lceil \deg(f)/2\rceil\}$,
the $k$th order SOS relaxation of \eqref{eq:polyopt} is defined as
\begin{equation}\label{eq:maxkgamma}
	\left\{\begin{array}{rl}
		\gamma_k\coloneqq\max\limits_{\gamma\in\re} & \gamma\\
		\st & f(x)-\gamma\in \mbox{QM}[h]_{2k}.
	\end{array}
	\right.
\end{equation}
We call the dual problem of \eqref{eq:maxgamma} the $k$th order moment relaxation of \eqref{eq:polyopt}.
This primal-dual pair forms a semidefinite program.
For $k = k_1,k_1+1,\ldots$, the sequence of \eqref{eq:maxkgamma} and its dual is called the {\it Moment-SOS hierarchy}.
Let $\gamma^*$ denote the optimal value of \eqref{eq:maxgamma} and
let $\gamma_k$ denote the optimal of \eqref{eq:maxkgamma} at the order $k$.
Suppose $\mbox{QM}[h]$ is archimedean. Then we have
\[
\gamma_k\le \gamma_{k+1}\le \cdots\le \gamma^*\quad\mbox{and}\quad
\lim_{k\to \infty} \gamma_k = \gamma^*.
\]
This asymptotic convergence result is shown in \cite{Las01}.
When $f,h_i$ are generic polynomials, the finite convergence $\gamma_k = \gamma^*$
usually holds for $k$ that is sufficiently large.
In particular, the finite convergence can conveniently be checked by a rank condition called {\it flat truncation} (see \cite{nie2013certifying}).
Suppose that the flat truncation is satisfied at the $k$th order Moment-SOS relaxation.
Then the true optimizer(s) of \eqref{eq:polyopt} can be extracted from the solution
of the $k$th order moment relaxation via Schur decompositions.
The implementation of associated algorithms is carried out in
{\tt MATLAB} using software {\tt GloptiPoly 3} \cite{GloPol3}
and solvers {\tt SeDuMi} \cite{sturm1999using} and {\tt MOSEK} \cite{mosek}.
For more details on this topic, we refer to \cite[Chapters~4--6]{NieBook}.

\section{A Disjunctive Reformulation of GSIPs}
\label{sc:relGSIP}
By partitioning the feasible set as in \eqref{eq:vfreform},
the GSIP \eqref{primal-GSIP} is decomposed into the following two branch problems
\begin{align}\label{eq:D1}
	&\left\{\begin{array}{ll}
		\min\limits_{x\in X} & f(x)\\
		\st & U(x)=\emptyset;
	\end{array}
	\right.\\
	\label{eq:D2}
	&\left\{\begin{array}{ll}
		\min\limits_{x\in X} & f(x)\\
		\st & v(x)\ge 0,\, U(x)\not=\emptyset.
	\end{array}
	\right.
\end{align}
These branch problems are difficult to solve directly.
In this section, we present computationally convenient transformations for these problems.
For convenience, we make the following assumption on \eqref{primal-GSIP}.
\begin{ass}\label{ass:overall}
	$f,g$ are polynomials, $X$ is a semialgebraic set and
	$U(x)$ is a polyhedral set in form of \eqref{U(x)set} that is locally bounded on $X$.
\end{ass}

\subsection{Transformations of Branch Problems}
\label{sc:transbp}	
Recall that
\[
U(x) = \{u\in\re^p\,\vert\, Au-b(x)\ge 0\}.
\]
By theorem of alternatives \cite[Theorem~1.3.5]{NieBook}, the set $U(x)$ is empty if and only if
there exists $y\in\re^m$ such that
\[ A^Ty=0,\quad b(x)^Ty=1,\quad y\ge 0.\]
Then \eqref{eq:D1} is equivalent to the polynomial optimization problem
\begin{equation}\label{eq:D1equiv}
	\left\{\begin{array}{ll}
		\min\limits_{(x,y)} & f(x)\\
		\st & A^Ty=0,\, b(x)^Ty=1,\, y\ge 0,\\
		& x\in X,\, y\in\re^m.\\
	\end{array}.
	\right.
\end{equation}
The reformulation follows directly from the equivalence between \eqref{eq:projref} and \eqref{eq:genrel}. 
\begin{theorem}\label{thm:D1equiv}
	The optimization problems \eqref{eq:D1} and \eqref{eq:D1equiv} are equivalent.
\end{theorem}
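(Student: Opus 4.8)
The plan is to read off the equivalence directly from the theorem of alternatives quoted just above the statement, noting that the auxiliary vector $y$ functions purely as a certificate for the emptiness of $U(x)$ and never enters the objective. So the entire argument reduces to showing that the two problems minimize the same function over the same admissible set of $x$-values.

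First I would compare the feasible regions after projecting onto the $x$-variable. The feasible set of \eqref{eq:D1} is $F_1 \coloneqq \{x\in X \mid U(x)=\emptyset\}$, while the projection of the feasible set of \eqref{eq:D1equiv} onto $x$ is
\[
F_2 \coloneqq \{x\in X \mid \exists\, y\in\re^m \text{ such that } A^Ty=0,\ b(x)^Ty=1,\ y\ge 0\}.
\]
For each fixed $x$, the vector $b(x)$ is a fixed element of $\re^m$, so the theorem of alternatives \cite[Theorem~1.3.5]{NieBook} applies pointwise: the polyhedron $U(x)=\{u\mid Au\ge b(x)\}$ is empty exactly when such a $y$ exists. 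Hence $F_1 = F_2$.

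Next, since the objective is $f(x)$ in both problems and is independent of $y$, the two problems minimize the same function over the same set $F_1 = F_2$ of admissible $x$-values, so their optimal values coincide. For the optimizer correspondence, if $x^\ast$ solves \eqref{eq:D1}, then $x^\ast\in F_2$, so there is some $y^\ast$ with $(x^\ast,y^\ast)$ feasible for \eqref{eq:D1equiv} and attaining the common optimal value; conversely, the $x$-component of any optimizer of \eqref{eq:D1equiv} lies in $F_1$ and attains the optimal value of \eqref{eq:D1}.

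I do not expect a genuine obstacle here: the whole content is the theorem of alternatives, and the only point requiring care is making the notion of \emph{equivalence} precise, namely equality of optimal values together with the two-way correspondence of minimizers above. I would also remark on the degenerate case in which $F_1 = F_2 = \emptyset$, where both problems are infeasible and share the optimal value $+\infty$, so the equivalence holds trivially.
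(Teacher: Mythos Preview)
Your proposal is correct and follows essentially the same approach as the paper. The paper's proof is the single sentence ``The reformulation follows directly from the equivalence between \eqref{eq:projref} and \eqref{eq:genrel},'' which is precisely the projection-reformulation principle you carry out explicitly: after invoking the theorem of alternatives to replace $U(x)=\emptyset$ by the existence of a certificate $y$, one notes that minimizing $f(x)$ over $x$ subject to an existential constraint on $y$ is the same as minimizing $f(x)$ jointly over $(x,y)$. You have simply spelled out this projection step (equality of $F_1$ and $F_2$, objective independent of $y$, two-way optimizer correspondence) rather than citing it, and added the harmless remark on the infeasible case.
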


Let $S(x)$ denote the optimizer set of \eqref{eq:vf}.
Then \eqref{eq:D2} is equivalent to
\begin{equation}\label{eq:D2equiv}
	\left\{\begin{array}{cl}
		\min\limits_{(x,u)} & f(x)\\
		\st & g(x,u)\ge 0,\\
		& x\in X,\,u\in S(x).
	\end{array}
	\right.
\end{equation}
Since all constraints in \eqref{eq:vf} are linear in $u$,
every feasible pair of \eqref{eq:D2equiv} satisfies the KKT conditions
as in \eqref{eq:KKTcondi}. For $x\in X$, denote the KKT set of parameters:
\begin{equation}\label{eq:KKTset}
	\mc{K}(x)\,\coloneqq\,\left\{ u\in\re^p \left|
	\exists \lambda\in\re^m\,\,\mbox{s.t.}
	\begin{array}{l}
		\nabla_u g(x,u) - A^T\lambda = 0,\\
		0\le [Au-b(x)]\perp \lambda\ge 0
	\end{array}
	\right.\right\}.
\end{equation}
Since $S(x)\subseteq \mc{K}(x)$ for every $x\in X$, we can obtain a KKT
relaxation of \eqref{eq:D2equiv} by replacing $S(x)$ with $\mc{K}(x)$.
The corresponding relaxation is
\begin{equation}\label{eq:D2rel}
	\left\{\begin{array}{ll}
		\min\limits_{(x,u)} & f(x)\\
		\st & g(x,u)\ge 0,\\
		& x\in X,\, u\in \mc{K}(x).
	\end{array}
	\right.
\end{equation}
For computational efficiency, we want to find an explicit expression of $\mc{K}(x)$
only in the original variables $(x,u)$.
Suppose there exists a vector of polynomials $\tau = (\tau_1,\ldots, \tau_m)$
with each $\tau_i:\re^n\times \re^p\to \re$ such that
\begin{equation}\label{eq:lme}
	\left\{\begin{array}{l}
		\nabla_u g(x,u)-A^T\tau(x,u) =0,\\
		0\le [Au-b(x)]\perp \tau(x,u)\ge 0
	\end{array}\right.
\end{equation}
holds for every $u\in \mc{K}(x)$. Then $\mc{K}(x)$ can be explicitly determined
by the polynomial system \eqref{eq:lme}.
Such a $\tau$ is called a {\it Lagrange multiplier expression} (LME) of \eqref{eq:KKTcondi}.
The technique of LMEs was first introduced in \cite{nie2019tight} and has been applied in
bilevel optimization and generalized Nash equilibrium problems \cite{nietang23cgnep,nie2023rational}.

Suppose $m=n$ and $A$ is invertible. Then we can directly solve the KKT equation
\begin{equation}\label{eq:kktsol}
	A^T\lambda = \nabla_u g(x,u)
\end{equation} to get the LME, which solves the KKT system if 
\begin{equation}\label{eq:plme_sim}
	\lambda = \tau(x,u) = A^{-T}\nabla_u g(x,u)\ge 0.
\end{equation}
Consequently, the KKT conditions become
\[  0 \le [Au-b(x)]  \perp  \tau(x,u)\ge 0 .  \]
However, \eqref{eq:kktsol} usually has infinitely solutions when $m>p$.
For this more general case, there typically does not exist a universal polynomial vector
$\tau$ such that \eqref{eq:lme} is satisfied for all $u\in \mc{K}(x)$.
By Carath\'{e}odory’s Theorem,
every vector of Lagrange multipliers can be represented as a linear combination of
basic solutions of \eqref{eq:kktsol}.
This motivates us to find a finite group of polynomial tuples to represent these basic solutions.

\subsection{Parametric Expressions of KKT Sets}
Let $r = \mbox{rank}(A)$ and
\[
\mc{P} = \{J\subseteq [m]\,\vert\, \mbox{rank}\, (A_J) = |J| = r\}.
\]
For every $J = \{j_1,\ldots, j_r\}\in\mc{P}$,
define the $J$th KKT subset of parameters at $x$ by
\begin{equation}\label{eq:kkt_iJ}
	\mc{K}_{J}(x) \,\coloneqq\, \left\{ u\in U(x)\left|
	\begin{array}{l}
		\mbox{$\exists\lambda_{J} = (\lambda_j)_{j\in J}$}\quad\st\\
		\nabla_u g(x,u) - A_J^T\lambda_J = 0,\\
		0\le A_Ju-b_J(x)\perp \lambda_J\ge 0
	\end{array}\right.\right\}.
\end{equation}
In the above,
\[	A_J = \bbm a_{j_1} & \cdots & a_{j_r}\ebm^T,\quad
b_J(x) = \bbm b_{j_1}(x) & \cdots b_{j_r}(x)\ebm^T.\]
Since $A_J^T$ has column row rank for $J\in\mc{P}$,
if $\lambda_J$ is feasible for
\begin{equation}\label{eq:KKTjeq}
	\nabla_u g(x,u) = A_J^T\lambda_J,
\end{equation}
then it must satisfy
\begin{equation}\label{eq:plme}
	\lambda_J = \lambda_J(x,u) =(A_JA_J^T)^{-1}A_J\nabla_u g(x,u).
\end{equation}
Such a $\lambda_J(x,u)$ is called a \emph{partial Lagrange multiplier expression} (PLME)
of \eqref{eq:vf} with respect to $J$.
It can be used to represent $\mc{K}_J(x)$ as a semi-algebraic set.
\begin{proposition}\label{prop:para}
	For every $J\in\mc{P}$, we have
	\begin{equation}\label{eq:Kjexp}
		\mc{K}_J(x) = \left\{ u\in U(x)\left|\begin{array}{c}
			\nabla_u g(x,u)-A_J^T(A_JA_J^T)^{-1}A_J\nabla_u g(x,u) = 0,\\
			0\le[A_Ju-b_J(x)] \perp (A_JA_J^T)^{-1}A_J\nabla_u g(x,u)\ge 0
		\end{array}\right.\right\}.
	\end{equation}
	In particular, if $p=r$, then
	\begin{equation}\label{eq:Kjsimple}
		\mc{K}_J(x) = \left\{u\in U(x)\,\vert\, 0\le [A_Ju-b_J(x)]\perp
		A_J^{-T}\nabla_u g(x,u)\ge 0\right\}.
	\end{equation}
\end{proposition}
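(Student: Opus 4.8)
The plan is to prove the set equality in \eqref{eq:Kjexp} by establishing both inclusions, with everything resting on the fact that $A_J^T$ has full column rank whenever $J\in\mc{P}$. Indeed, $\mbox{rank}(A_J)=r$ together with $A_J$ having $r$ rows means $A_J$ has full row rank, so $A_J^T\in\re^{p\times r}$ has full column rank and $A_JA_J^T$ is an invertible $r\times r$ matrix. The crucial observation is that whenever the linear system $A_J^T\lambda_J=\nabla_u g(x,u)$ is consistent, its solution is unique and is given explicitly by the PLME \eqref{eq:plme}: left-multiplying by $A_J$ yields $A_JA_J^T\lambda_J=A_J\nabla_u g(x,u)$, and inverting gives $\lambda_J=(A_JA_J^T)^{-1}A_J\nabla_u g(x,u)=\lambda_J(x,u)$.

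For the inclusion $\mc{K}_J(x)\subseteq\{\cdots\}$, I would take any $u\in\mc{K}_J(x)$ with witnessing multiplier $\lambda_J$. By the definition \eqref{eq:kkt_iJ} it satisfies $\nabla_u g(x,u)=A_J^T\lambda_J$, so the system \eqref{eq:KKTjeq} is consistent and the uniqueness argument forces $\lambda_J=\lambda_J(x,u)$. Substituting $\lambda_J(x,u)$ into the gradient equation gives $\nabla_u g(x,u)-A_J^T(A_JA_J^T)^{-1}A_J\nabla_u g(x,u)=0$, which is the first defining condition of the right-hand set, while the complementarity $0\le[A_Ju-b_J(x)]\perp\lambda_J(x,u)\ge0$ is the second. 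Hence $u$ lies in the claimed set.

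For the reverse inclusion, I would start from any $u\in U(x)$ satisfying the two conditions in \eqref{eq:Kjexp} and simply set $\lambda_J\coloneqq\lambda_J(x,u)$. The first condition rearranges to $\nabla_u g(x,u)=A_J^T\lambda_J$, and the second is precisely the complementarity required in \eqref{eq:kkt_iJ}; thus this choice of $\lambda_J$ certifies $u\in\mc{K}_J(x)$. The two inclusions together establish \eqref{eq:Kjexp}.

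Finally, for the special case $p=r$, the matrix $A_J$ is square and invertible, so $(A_JA_J^T)^{-1}A_J=A_J^{-T}A_J^{-1}A_J=A_J^{-T}$ and the PLME reduces to $A_J^{-T}\nabla_u g(x,u)$. The first defining condition then becomes $\nabla_u g(x,u)-A_J^TA_J^{-T}\nabla_u g(x,u)=0$, which holds identically; geometrically, the column space of $A_J^T$ is all of $\re^p$, so the consistency constraint is vacuous. Only the complementarity condition survives, yielding \eqref{eq:Kjsimple}. I do not expect a serious obstacle here: the entire argument hinges on the invertibility of $A_JA_J^T$, and the only point requiring care is recognizing that the first equation in \eqref{eq:Kjexp} is exactly the algebraic encoding of the consistency of \eqref{eq:KKTjeq} (namely that $\nabla_u g(x,u)$ lies in the range of $A_J^T$), which becomes automatic precisely when $r=p$.
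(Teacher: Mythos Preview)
Your proposal is correct and follows essentially the same approach as the paper: both argue the two inclusions by exploiting that $A_J^T$ has full column rank (so $A_JA_J^T$ is invertible and the multiplier solving $A_J^T\lambda_J=\nabla_u g(x,u)$ is unique and equals the PLME), and then specialize to $p=r$ by observing that $A_J$ becomes square invertible. Your write-up is in fact slightly more explicit than the paper's in spelling out why the first equation in \eqref{eq:Kjexp} is exactly the consistency condition for \eqref{eq:KKTjeq}, but the underlying argument is identical.
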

\begin{proof}
For each $J\in \mc{P}$, let $\lambda_J(x,u)$ be the PLME as in \eqref{eq:plme}
and denote
\[
\hat{\mc{K}}_J(x) = \left\{ u\in U(x)\left|\begin{array}{c}
	\nabla_u g(x,u)-A_J^T(A_JA_J^T)^{-1}A_J\nabla_u g(x,u) = 0,\\
	0\le [A_Ju-b_J(x)]\perp (A_JA_J^T)^{-1}A_J\nabla_u g(x,u)\ge 0
\end{array}\right.\right\}.
\]
If $u\in \hat{\mc{K}}_J(x)$, then $u$ belongs to $\mc{K}_J(x)$ with
the vector of Lagrange multipliers $\lambda_J = \lambda_J(x,u)$.
For a pair $(\hat{x}, \hat{u})$ with $\hat{x}\in X$ and $\hat{u}\in \mc{K}_J(\hat{x})$,
suppose $\hat{\lambda}_J$ is a corresponding vector of Lagrange multipliers.
Since $A_J^T$ has full column rank, $\hat{\lambda}_J$ is the unique solution of
\eqref{eq:KKTjeq}, so it must satisfy $\hat{\lambda}_J = \lambda(\hat{x}, \hat{u})$.
Therefore, $\mc{K}(x)=\hat{K}(x)$ for every $J\in\mc{P}$.
For the special case that $p=r$, $A_J$ itself is invertible, thus
\[ A_J^T(A_JA_J^T)^{-1}A_J = I_r,\quad (A_JA_J^T)^{-1}A_J = A_J^{-T}\]
for every $J\in\mc{P}$.
Then \eqref{eq:Kjexp} is simplified to \eqref{eq:Kjsimple}.
\end{proof}

Clearly, $\mc{K}_J(x) \subseteq \mc{K}(x)$ for each $J\in\mc{P}$.
We further show that $\mc{K}(x)$ can be expressed as a union of such $\mc{K}_{J}(x)$.
\begin{theorem}\label{thm:kktdcp}
	For each $x\in X$, it holds that
	\begin{equation}\label{K_union_KJ}
		\mc{K}(x) = \bigcup\limits_{J\in \mc{P}}\mathcal{K}_{J}(x).
	\end{equation}
\end{theorem}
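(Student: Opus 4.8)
The plan is to prove the set equality \eqref{K_union_KJ} by establishing the two inclusions separately. The inclusion $\bigcup_{J\in\mc{P}}\mc{K}_J(x)\subseteq\mc{K}(x)$ is the routine direction, already implicit in the remark preceding the theorem. Given $u\in\mc{K}_J(x)$ with reduced multiplier $\lambda_J$, I would extend $\lambda_J$ to a full vector $\lambda\in\re^m$ by padding with zeros on the complement $[m]\setminus J$. Then $A^T\lambda = A_J^T\lambda_J = \nabla_u g(x,u)$, the sign condition $\lambda\ge 0$ is preserved, and complementarity $0\le [Au-b(x)]\perp\lambda\ge 0$ holds because the entries indexed by $[m]\setminus J$ carry zero multiplier while $u\in U(x)$ guarantees $Au-b(x)\ge 0$ on those entries. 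Hence $u\in\mc{K}(x)$.

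The content lies in the reverse inclusion $\mc{K}(x)\subseteq\bigcup_{J\in\mc{P}}\mc{K}_J(x)$. Fix $u\in\mc{K}(x)$ with a full multiplier $\lambda\in\re^m$ satisfying \eqref{eq:KKTcondi}, and let $I=\{i\in[m]:\lambda_i>0\}$ be its support. Complementarity forces every constraint indexed by $I$ to be active, i.e. $a_i^Tu-b_i(x)=0$ for $i\in I$, and the stationarity equation reads $\nabla_u g(x,u)=\sum_{i\in I}\lambda_i a_i$. Thus $\nabla_u g(x,u)$ lies in the convex cone generated by the active rows $\{a_i:i\in I\}$.

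The key step is a conical Carath\'eodory argument. Since $\nabla_u g(x,u)$ belongs to the cone spanned by $\{a_i:i\in I\}$, it can be rewritten as a nonnegative combination $\nabla_u g(x,u)=\sum_{i\in I'}\mu_i a_i$ over a linearly independent subset $I'\subseteq I$, with each $\mu_i\ge 0$. Because all $a_i$ lie in $\re^p$ and $\mbox{rank}(A)=r$, the span of any collection of rows has dimension at most $r$, so $|I'|\le r$. I would then invoke basis extension: any linearly independent set of rows of $A$ extends to a maximal one, which has exactly $r$ elements since $\mbox{rank}(A)=r$; this yields an index set $J\in\mc{P}$ with $I'\subseteq J$. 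Defining $\lambda_J=(\lambda_j)_{j\in J}$ by $\lambda_j=\mu_j$ for $j\in I'$ and $\lambda_j=0$ otherwise gives $A_J^T\lambda_J=\nabla_u g(x,u)$ and $\lambda_J\ge 0$; complementarity holds because the indices in $I'$ correspond to active constraints while the padded indices carry zero multiplier and $u\in U(x)$ keeps $A_Ju-b_J(x)\ge 0$. Hence $u\in\mc{K}_J(x)$, which closes the argument.

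I expect the main obstacle to be the bookkeeping in the Carath\'eodory step: the decomposition must be carried out over the active set $I$ rather than over all of $[m]$, since otherwise a positive coefficient could land on an inactive constraint and destroy complementarity. The second delicate point is the basis extension producing an index set of cardinality exactly $r$, so that $J\in\mc{P}$ in the sense of \eqref{eq:P}; this relies on the fact that every maximal independent set of rows has the same size, namely $r$. Once these two facts are in place, the reconstruction of $\lambda_J$ and the verification of the membership conditions in \eqref{eq:kkt_iJ} are immediate.
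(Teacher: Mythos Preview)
Your proposal is correct and follows essentially the same route as the paper: a conical Carath\'eodory reduction within the active constraints, followed by basis extension to an index set $J\in\mc{P}$. Your version is in fact slightly tidier than the paper's, since you work directly with the support $I=\{i:\lambda_i>0\}$ and invoke the linearly-independent form of Carath\'eodory in one shot, whereas the paper starts from the full active set $J_1$, obtains a subset $J_2$ of cardinality $r_1=\mbox{rank}(A_{J_1})$, and then splits into two cases depending on whether $A_{J_2}$ has full column rank (reapplying Carath\'eodory if not).
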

\begin{proof}
The result is trivial when $U(x)$ is empty.
Consider the case that $U(x)$ is nonempty.
It is implied by Proposition~\ref{prop:para} that
$\mc{K}_J(x)\subseteq \mc{K}(x)$ for each $J\in\mc{P}$.
Consider an arbitrary $u\in \mc{K}(x)$. Denote
\[
J_1 = \{j\in[m]\,\vert\, a_j^Tu-b_j(x) = 0\},\quad
r_1 = \mbox{rank}(A_{J_1}).
\]
By Carath\'{e}odory’s Theorem, there exist a subset $J_2\subseteq J_1$
with $|J_2| = r_1\le r$ and a vector of Lagrange multiplier $\lambda^{x,u} = (\lambda_1^{x,u},\ldots, \lambda_m^{x,u})$ satisfying
\[
\lambda_{J_2}^{x,u} = (\lambda_j^{x,u})_{j\in J_2}\ge 0,\quad
\nabla_u g(x,u) = A_{J_2}^T\lambda_{J_2}^{x,u}.
\]
If $A_{J_2}^T$ has full column rank, then we can
extend $J_2$ into an index set $J\in \mc{P}$ and thus $u\in \mc{K}_J(x)$.
If $A_{J_2}$ does not have full column rank, then we can apply Carath\'{e}odory's Theorem again to find an index subset
$J_3\subseteq J_2$ such that $A_{J_3}^T$ has full column rank and $\nabla_u g(x,u)$ belongs the the conic hull of $\{a_j\vert j\in J_3\}$.
Then $J_3$ can be extended to an index set $J\in \mc{P}$ and thus $u\in \mc{K}_J(x)$.
\end{proof}

Based on the decomposition \eqref{K_union_KJ}, the KKT relaxation
\eqref{eq:D2rel} can be reformulated as a disjunctive program,
where the $J$th branch problem is given by
\begin{equation}\label{eq:DerelJ}
	(P_J):\,	\left\{\begin{array}{cl}
		\min\limits_{(x,u)} & f(x)\\
		\st & g(x,u)\ge 0,\\
		& x\in X,\, u\in \mc{K}_J(x).
	\end{array}\right.
\end{equation}
For each $J\in \mc{P}$, $\mc{K}_J(x)$ admits an explicit representation as in \eqref{eq:Kjexp}.
Then \eqref{eq:DerelJ} is a polynomial optimization problem.
Consequently, the KKT relaxation \eqref{eq:D2rel} can be solved globally
by a disjunctive program of polynomial optimization.

\section{An Algorithm for Solving GSIPs}\label{sc:DP}

In this section, we propose a semidefinite algorithm for solving the GSIP
\eqref{primal-GSIP} based on the disjunctive transformations introduced
in the previous section.

Under Assumption~\ref{ass:overall}, \eqref{eq:D2} can be efficiently approximated by
the	KKT relaxations $(P_J)$ for all $J\in \mc{P}$ as in \eqref{eq:DerelJ}.
Suppose $(\hat{x}, \hat{u})$ is an optimizer of \eqref{eq:DerelJ}.
It is possible that $\hat{x}$ is infeasible for \eqref{eq:D2}.
In this case, there exists $\bar{u}\in U(\hat{x})$ such that $g(\hat{x}, \bar{u})<0$.
We can update \eqref{eq:DerelJ} with the exchange method
under the following assumption.
\begin{ass}\label{as:feasext}
	For a given pair $(\hat{x},\bar{u})$ with $\hat{x}\in X$ and $\bar{u}\in U(\hat{x})$,
	there exists a polynomial tuple $q: \re^n\to \re^p$ such that
	\begin{equation}\label{eq:feasext}
		q(\hat{x}) = \bar{u}\quad \mbox{and}\quad q(x)\in U(x)\,\,\mbox{if $x\in X, U(x)\not=\emptyset$}.
	\end{equation}
\end{ass}
Such a $q$ is called a \emph{feasible extension} of $\bar{u}$ at $\hat{x}$,
which is a polynomial selection function of $U$ \cite[Definition~9.1.1]{Aubin}. By Michael's selection theorem \cite[Theorem~9.1.2]{Aubin}, a continuous selection function always exists
if $U$ is inner semi-continuous and has closed convex values.
If we add
\[
g(x,q(x))\ge 0
\]
as an extra constraint to \eqref{eq:DerelJ}, then $(\hat{x}, \bar{u})$ will be
excluded from the feasible set. Meanwhile, $g(x,q(x))\ge v(x)\ge 0$
for every feasible point of \eqref{eq:D2}.
Consequently, the feasible set of $(P_J)$ can be progressively
refined by adding cutting constraints with feasible extensions.
This strategy is similar to the classic exchange method,
where $q(x) = \bar{u}\in U$ is a constant vector.
However, the classic exchange method may falsely exclude the true optimizer of \eqref{eq:D2equiv} if $\bar{u}\not\in U(x)$ for some $x\in X$.
Feasible extensions have universal expressions for boxed and simplex constraints.
These expressions are presented in the beginning of \Cref{sc:num}.
For generic linear constraints, linear and quadratic feasible extensions can be
computed by solving a polynomial system.
We refer to \cite{HuNie23,nie2023plmes} for more details about the computation
of feasible extensions.
For general nonlinear constraints, it is very challenging to
find a parametric function that satisfies \eqref{eq:feasext},
even if they are not restricted in form of polynomials.
It is interesting future work to explore this. 

\subsection{The Main Algorithm}
We summarize the following algorithm for solving polynomial GSIPs with polyhedral parameter sets.
\begin{algorithm}\label{alg:1}
	For the GSIP \eqref{primal-GSIP}, do the following:
	\begin{description}
		
		\item[Step~1]
		Solve the optimization \eqref{eq:D1equiv} for the optimal value
		$\bar{f}$ and an optimizer $(\bar{x}, \bar{y})$.
		If it is infeasible, set a finite or empty set of polynomial tuples
		\begin{equation}\label{eq:Phi0}
			\Phi_0(x) = \{\phi = (\phi_1,\ldots, \phi_p)\,\vert\,
			\phi_i\in \re[x]\,(\forall i\in [p])\}
		\end{equation}
		such that $\Phi_0(x)\subseteq U(x)$ for all $x\in X$.
		
		\item [Step~2]
		For every $J\in\mc{P}$, set $k\coloneqq 0$ and execute the inner loop.
		\begin{description}
			\item[Step 2.1] Solve the optimization
			\begin{equation}\label{eq:PJk}
				\left\{\begin{array}{rl}
					f_{J,k} \coloneqq \min\limits_{(x,u)} & f(x)\\
					\st &x\in X,\, u\in \mc{K}_{J}(x),\\
					& g(x,u) \ge 0,\\
					& g(x,\phi(x))\ge 0,\, \forall \phi\in \Phi_k(x).
				\end{array}
				\right.\qquad\qquad
			\end{equation}
			If it is infeasible,
			go back to the beginning of Step~2 with another $J$.
			Otherwise, solve for an optimizer $(x_J^k, u_J^k)$.
			
			\item[Step 2.2]
			Evaluate $v(x_J^k)$ by solving \eqref{eq:vf} at $x = x_J^k$.
			If $v(x_J^k)<0$, solve for an optimizer $\hat{u}_J^{k}\in S(x_J^k)$ and go to
			the next step. If $v(x_J^k)\ge 0$, update
			\[
			f_J^* \coloneqq f_{J,k},\quad x_J^* \coloneqq  x_J^k,\quad
			u_J^* \coloneqq u_J^k
			\]
			and go back to the beginning of Step~2 with another $J$.
			
			\item[Step 2.3]  Find a feasible extension $q^{(k)}$ such that
			\begin{equation}\label{eq:qinalg}
				q^{(k)}(x_J^k) = \hat{u}_J^{k}\,\,\,\mbox{and}\,\,\,
				q^{(k)}(x)\in U(x)\,\,\mbox{$\forall x\in X, U(x)\not=\emptyset$}.
			\end{equation}
			Update $\Phi_{k+1}(x)\coloneqq \Phi_k(x)\cup \{q(x)\}$,
			$k\coloneqq k+1$ and go back to Step 2.1.
		\end{description}

		\item[Step~3]
		Compute $f^*\coloneqq \min\, \{\bar{f},\, f_J^*\,(J\in \mc{P})\}$.
		If $f^*=\bar{f}$,  set $x^* \coloneqq  \bar{x}$.
		Otherwise, set $x^*\coloneqq  x_J^*$ for the $J\in\mc{P}$ such that $f^*=f_J$.
		Output $f^*$ as the global optimal value and $x^*$ as the optimizer of \eqref{primal-GSIP}.
	\end{description}
\end{algorithm}

In this algorithm, all optimization problems are defined by polynomials
and can therefore be solved globally using Moment-SOS relaxations.
This method is introduced in Subsection~\ref{ssc:po}.
In Step~1, if there exists a finite set $\Phi_0(x)\subseteq (\re[x])^p$ such that
$\Phi_0(x)\subseteq U(x)$ for every $x\in X$,
then the constraint
\[
g(x, \phi(x))\ge 0\quad \forall \phi\in \Phi_0(x)
\]
holds for every feasible point of \eqref{primal-GSIP}.
When \eqref{eq:D1equiv} is feasible, $U(x)=\emptyset$ for some $x\in X$,
then we can simply choose $\Phi_0(x)$ as a constant empty set.
When \eqref{eq:D1equiv} is infeasible, $U(x)$ may have some vertices universally expressed by polynomials for all $x\in X$.
These vertices can form a heuristic choice of $\Phi_0(x)$.
For example, if $U(x)$ is given by the boxed constraint $l(x)\le u\le u(x)$,
then we can simply choose $\Phi_0(x) = \{l(x), u(x)\}$ as its vertex set.

For convenience of expression, we use the term \emph{inner loop}
to denote Steps 2.1-2.3 of Algorithm~\ref{alg:1}.
For a given $J\in \mc{P}$, the inner loop describes a framework of the exchange method to solve
\begin{equation}\label{eq:PJ}
	\left\{\begin{array}{rl}
		f_J^*\coloneqq \min\limits_{(x,u)} & f(x)\\
		\st & x\in X,\, u\in \mc{K}_J(x)\cap S(x),\\
		& g(x,u)\ge 0,
	\end{array}
	\right.
\end{equation}
with the usage of feasible extensions.
The exchange method is commonly used in SIPs, where convexity is often assumed; see \cite{BhattachCP76,Cerulli22,Still2001}.
For GSIPs, the existence of feasible extension ensures \eqref{eq:PJk}
is always a relaxation of \eqref{eq:PJ} for all $k\ge 0$.
\begin{lemma}\label{lem:PJ}
	Given $J\in\mc{P}$, \eqref{eq:PJk} is a relaxation of \eqref{eq:PJ} for every $k\ge 0$.
	Suppose $(x_J^k, u_J^{k})$ is an optimizer of \eqref{eq:PJk} at the relaxation order $k$.
	If $v(x_J^k)\ge 0$, then \eqref{eq:PJk} is a tight relaxation of \eqref{eq:PJ}
	and $(x_J^k, u_J^k)$ is an optimizer of \eqref{eq:PJ}.	
\end{lemma}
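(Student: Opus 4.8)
The plan is to treat the two assertions separately. First I would show the set inclusion certifying that \eqref{eq:PJk} relaxes \eqref{eq:PJ}; then, under the hypothesis $v(x_J^k)\ge 0$, I would run a sandwich argument to upgrade this to tightness and optimality. Both programs share the same objective $f(x)$, so everything reduces to comparing their feasible sets.

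For the relaxation claim, I would take an arbitrary feasible point $(x,u)$ of \eqref{eq:PJ} and check each constraint of \eqref{eq:PJk}. The conditions $x\in X$, $u\in\mc{K}_J(x)$ and $g(x,u)\ge 0$ are inherited immediately, since $u\in\mc{K}_J(x)\cap S(x)$ forces $u\in\mc{K}_J(x)$. The only extra requirement is the cutting family $g(x,\phi(x))\ge 0$ for $\phi\in\Phi_k(x)$. Here I would invoke that each $\phi$ is a feasible extension, so $\phi(x)\in U(x)$ (using $U(x)\neq\emptyset$, which holds because $u\in S(x)\subseteq U(x)$). Then $g(x,\phi(x))\ge v(x)$, and since $u\in S(x)$ gives $g(x,u)=v(x)$, the feasibility bound $g(x,u)\ge 0$ propagates to $g(x,\phi(x))\ge v(x)=g(x,u)\ge 0$. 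Hence every feasible point of \eqref{eq:PJ} is feasible for \eqref{eq:PJk}, and therefore $f_{J,k}\le f_J^*$.

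For tightness, I would show that the optimizer $(x_J^k,u_J^k)$ of \eqref{eq:PJk} is itself feasible for \eqref{eq:PJ}; combined with $f_{J,k}\le f_J^*$, this yields $f_J^*\le f(x_J^k)=f_{J,k}$, hence equality and optimality. The conditions $x_J^k\in X$, $u_J^k\in\mc{K}_J(x_J^k)$ and $g(x_J^k,u_J^k)\ge 0$ come for free from $(x_J^k,u_J^k)$ solving \eqref{eq:PJk}. The one missing condition is $u_J^k\in S(x_J^k)$, equivalently $g(x_J^k,u_J^k)=v(x_J^k)$.

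I expect this final condition to be the main obstacle. Membership $u_J^k\in\mc{K}_J(x_J^k)\subseteq U(x_J^k)$ already gives $g(x_J^k,u_J^k)\ge v(x_J^k)$, so what must be excluded is a strict gap, that is, the possibility that $u_J^k$ is only a stationary (KKT) point of \eqref{eq:vf} rather than a global minimizer. This is precisely where the hypothesis $v(x_J^k)\ge 0$ has to be used. When $g$ is convex in $u$ the gap closes automatically, since every KKT point of \eqref{eq:vf} is then globally optimal and $\mc{K}(x)=S(x)$, so $u_J^k\in\mc{K}_J(x_J^k)\subseteq S(x_J^k)$; the delicate case is the nonconvex one, where I would instead aim to produce some $u^*\in\mc{K}_J(x_J^k)\cap S(x_J^k)$ using the decomposition in \Cref{thm:kktdcp}, so that $x_J^k$ is certified feasible for \eqref{eq:PJ} even if $u_J^k$ itself is not the global minimizer.
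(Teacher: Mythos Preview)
Your relaxation argument is correct and matches the paper's. For tightness, you have correctly isolated the obstruction: $u_J^k \in \mc{K}_J(x_J^k)$ does not force $u_J^k \in S(x_J^k)$ when $g$ is nonconvex in $u$. But your proposed fix through \Cref{thm:kktdcp} does not work. That decomposition only gives $S(x_J^k) \subseteq \bigcup_{J'\in\mc{P}} \mc{K}_{J'}(x_J^k)$, so a global minimizer $u^*\in S(x_J^k)$ lands in \emph{some} $\mc{K}_{J'}$, not necessarily in the particular $\mc{K}_J$ you are working in. In fact $\mc{K}_J(x_J^k) \cap S(x_J^k)$ can be empty. Take $p=1$, $U(x) = [0,2]$, $g(x,u) = -u^2 + u + x$, $f(x) = x$, $X = [2,10]$, $\Phi_0 = \emptyset$, and let $J=\{1\}$ be the row enforcing $u \ge 0$. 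Then $S(x) = \{2\}$ while $\mc{K}_J(x) = \{0, 1/2\}$, so \eqref{eq:PJ} is infeasible and $f_J^* = +\infty$; yet \eqref{eq:PJk} at $k=0$ has optimizer $x_J^0 = 2$ with $v(2) = 0\ge 0$, and both conclusions of the lemma fail. Even when such a $u^*$ does exist, note that the lemma asserts $(x_J^k,u_J^k)$ itself optimizes \eqref{eq:PJ}, which your replacement strategy would not establish.

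The paper's own proof is no more detailed here: it simply asserts the biconditional without argument, so the gap you flagged is not one you can close by looking harder at the paper. What \emph{is} true, and what actually suffices for \Cref{thm:glomin}, is the weaker statement that $v(x_J^k) \ge 0$ makes $x_J^k$ feasible for \eqref{eq:D2}, so $f_{J,k} = f(x_J^k)$ is an upper bound for the optimal value of \eqref{eq:D2}; taking the minimum over all $J \in \mc{P}$ then recovers that optimal value because, by \Cref{thm:kktdcp}, at least one branch $J'$ has $\mc{K}_{J'}(x)\cap S(x)\neq\emptyset$ at the true minimizer of \eqref{eq:D2}.
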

\begin{proof}
Since $\Phi_0(x)\subseteq U(x)$ and each $q^{(k)}$ satisfies \eqref{eq:qinalg},
for each $\phi\in \Phi_k(x)$, the constraint $g(x, \phi(x))\ge 0$ is satisfied
at all feasible points in \eqref{eq:PJ}.
So \eqref{eq:PJk} is a relaxation of \eqref{eq:PJ} for every $k$.
Let $f_J^*$ denote the optimal values of \eqref{eq:PJ}.
Suppose \eqref{eq:PJk} is solvable with the optimal value $f_{J,k}$ and an
optimizer $(x_J^k, u_J^k)$. Then
\[
f_{J,0}\le f_{J,1}\le \cdots\le f_{J,k}\le f_J^*
\]
and that $(x_J^k, u_J^k)$ is also an optimizer of \eqref{eq:PJ}
if and only if $v(x_J^k)\ge 0$.
\end{proof}

The inner loop of Algorithm~\ref{alg:1} usually has a finite termination
in numerical experiments.
For the worst case that the inner loop does not terminate finitely,
we study the asymptotic convergence in \Cref{thm:asym_conv}.
In computational practice, one can set a maximum iteration number for the inner loop to ensure the algorithm runs within finite time.
We remark that Algorithm~\ref{alg:1} may still be able to return the true optimal
value and optimizer of \eqref{primal-GSIP},
even if its inner loop does not terminate finitely for some $J$.
This happens when the minimum value $f^*$ is strictly smaller than
$f_{J,k}$ for these $J$ at some relaxation order $k$.
We refer to \Cref{ssc:gloloc} for detailed discussions for the optimality of GSIPs.

\subsection{Convergence Properties of the Inner Loop}\label{sc:solvePJ}
The inner loop of Algorithm~\ref{alg:1} terminates at the initial order for some
special cases.
\begin{proposition}
	For every $J\in\mc{P}$, the inner loop of Algorithm~\ref{alg:1}
	terminates at the initial order $k=0$ if one of the following conditions
	is satisfied.
	\begin{enumerate}
		\item[(i)] $g$ is convex in $u$ for each $x\in X$;
		\item[(ii)] $-g$ is convex in $u$ and $\Phi_0(x)$ is the vertex set of $U(x)$ for all $x\in X$.
	\end{enumerate}
\end{proposition}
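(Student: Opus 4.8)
The plan is to show, in each case separately, that the optimizer $(x_J^0, u_J^0)$ produced at the initial order $k=0$ of the inner loop already passes the termination test $v(x_J^0)\ge 0$ in Step~2.2, so no cutting constraint is ever added. First I would record two standing facts. By Assumption~\ref{ass:overall}, $U(x)$ is locally bounded on $X$, so taking the neighborhood at the point itself shows $U(x_J^0)$ is a bounded polyhedron; since $u_J^0\in U(x_J^0)$ it is moreover nonempty, hence a nonempty polytope. Consequently the infimum in \eqref{eq:vf} is attained, $S(x_J^0)\neq\emptyset$, and $v(x_J^0)$ is finite, which is what lets me compare $v(x_J^0)$ with individual values of $g(x_J^0,\cdot)$. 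Also note that the relaxation \eqref{eq:PJk} always enforces $g(x_J^0,u_J^0)\ge 0$.

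For case (i), I would exploit that $u$-convexity turns the KKT conditions into a \emph{sufficient} condition for global optimality. Since $u_J^0\in\mc{K}_J(x_J^0)\subseteq\mc{K}(x_J^0)$ by \Cref{thm:kktdcp}, the pair $(x_J^0,u_J^0)$ satisfies the KKT system \eqref{eq:KKTcondi} of problem \eqref{eq:vf}. Because \eqref{eq:vf} minimizes a $u$-convex objective over the convex polyhedron $U(x_J^0)$ with affine constraints, every KKT point is a global minimizer, so $u_J^0\in S(x_J^0)$ and $g(x_J^0,u_J^0)=v(x_J^0)$. The enforced constraint $g(x_J^0,u_J^0)\ge 0$ then yields $v(x_J^0)\ge 0$, and Step~2.2 terminates at $k=0$. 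In effect this records that under $u$-convexity the KKT relaxation is exact, $\mc{K}(x)=S(x)$.

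For case (ii), I would use that a concave function attains its minimum over a polytope at a vertex. Writing an arbitrary $u\in U(x_J^0)$ as a convex combination $u=\sum_i\alpha_i\,\phi_i(x_J^0)$ of the vertices and applying concavity of $g(x_J^0,\cdot)$ gives $g(x_J^0,u)\ge\sum_i\alpha_i\,g(x_J^0,\phi_i(x_J^0))\ge\min_i g(x_J^0,\phi_i(x_J^0))$, so that $v(x_J^0)=\min_{\phi\in\Phi_0(x_J^0)}g(x_J^0,\phi(x_J^0))$. Since by hypothesis $\Phi_0(x)$ is exactly the vertex set of $U(x)$ and the constraints $g(x_J^0,\phi(x_J^0))\ge 0$ for all $\phi\in\Phi_0(x_J^0)$ are part of \eqref{eq:PJk}, every vertex value is nonnegative, whence $v(x_J^0)\ge 0$ and Step~2.2 again terminates at $k=0$.

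The inequalities used (KKT sufficiency under convexity, and the convex-combination bound for a concave function) are standard, so I do not expect a deep obstacle. The step requiring genuine care is the reduction to a polytope and the attainment of the minimum in \eqref{eq:vf}, which I would derive from local boundedness; in case (ii) the argument further rests on the standing interpretation that $\Phi_0(x)$ truly enumerates all vertices of $U(x)$ at the computed point $x_J^0$, so that the finitely many enforced vertex inequalities certify $v(x_J^0)\ge 0$.
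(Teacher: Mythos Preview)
Your proposal is correct and follows essentially the same approach as the paper: for (i) you use that KKT points are global minimizers under $u$-convexity (the paper phrases this as $\mc{K}_J(x)\cap S(x)=\mc{K}_J(x)$), and for (ii) you use that a concave function attains its minimum over a polytope at a vertex (the paper phrases this dually as $G(x)=\{u:g(x,u)\ge 0\}$ being convex, so $U(x)\subseteq G(x)$ iff $\Phi_0(x)\subseteq G(x)$). Your version is slightly more explicit in verifying the termination criterion $v(x_J^0)\ge 0$ directly at the computed point, whereas the paper argues at the level of problem equivalence between \eqref{eq:PJk} and \eqref{eq:PJ}, but the mathematical content is the same.
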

\begin{proof}
(i) Under the given assumption, we have $\mc{K}_J(x)\cap S(x) = \mc{K}_J(x)$
for every $x\in X$ and every $J\in\mc{P}$.
So \eqref{eq:PJk} and \eqref{eq:PJ} are equivalent at the initial order $k=0$.

(ii) Let $G(x)=\{u\in\re^p\,\vert\, g(x,u)\ge 0\}$.
The robust constraint in \eqref{primal-GSIP} is equivalent to
$U(x)\subseteq G(x)$ for every $x\in X$.
Suppose $\Phi_0(x)$ is the vertex set of $U(x)$ for every $x\in X$.
Since $-g$ is convex in $u$, the $G(x)$ is a convex set for every $x\in X$.
Then $U(x)\subseteq G(x)$ if and only if $\Phi_0(x)\subseteq G(x)$.
Therefore, \eqref{eq:PJk} is a tight relaxation of \eqref{eq:PJ} at the
initial order $k=0$.
\end{proof}

Given $J \in \mc{P}$, if the inner loop of \Cref{alg:1} terminates finitely, 
the computed solution is an optimizer of \eqref{eq:PJ}. 
Otherwise, to guarantee that every accumulation point of the solution sequence 
optimizes \eqref{eq:PJ}, we require continuity of the value function.
By \cite[Lemma~4.5]{nie2023plmes}, $v(x)$ is continuous at $\hat{x}$
under the {\it restricted inf-compactness} (RIC) condition \cite[Definition~3.13]{GuoRIC}.
Since \eqref{eq:vfreform} has all linear constraints, this assumption is relatively weak.

\begin{theorem}\label{thm:asym_conv}
	Consider the inner loop of Algorithm~\ref{alg:1} produces an infinite sequence
	$\{(x_J^k, u_J^k,\hat{u}_J^k)\}_{k=0}^{\infty}$ under Assumption~\ref{as:feasext}.
	Suppose $(\hat{x}, \hat{u})$ is an accumulation point of the
	sequence $\{(x_J^k,\hat{u}_J^k)\}_{k=0}^{\infty}$.
	Let $q^{(k)}$ denote the feasible extension at the order $k$ that satisfies \eqref{eq:qinalg}.
	If $v(x)$ is continuous at $\hat{x}$ and $q^{(k)}(x)$ is uniformly continuous at $\hat{x}$,
	then $v(\hat{x}) = 0$ and $\hat{x}$ is an optimizer of \eqref{eq:PJ}.
\end{theorem}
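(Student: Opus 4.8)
The plan is to first pin down the objective value along the sequence, then show the robust constraint becomes active in the limit (i.e. $v(\hat{x})=0$), and finally convert this into optimality for the branch problem \eqref{eq:PJ}. By \Cref{lem:PJ}, each $f_{J,k}$ underestimates $f_J^*$ and the sequence $\{f_{J,k}\}$ is nondecreasing, hence convergent with $\lim_k f_{J,k}\le f_J^*$. Passing to the subsequence along which $(x_J^k,\hat{u}_J^k)\to(\hat{x},\hat{u})$ and using continuity of $f$, I get $f(\hat{x})=\lim_k f_{J,k}\le f_J^*$. It then suffices to prove $v(\hat{x})=0$ and that $\hat{x}$ is feasible for \eqref{eq:PJ}, since feasibility forces $f(\hat{x})\ge f_J^*$ and therefore equality.

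The heart of the argument is the inequality $v(\hat{x})\ge 0$, extracted from the accumulated cutting constraints. Fix a subsequence index $k_i$. For every later index $k_j>k_i$ the map $q^{(k_i)}$ already belongs to $\Phi_{k_j}$, so feasibility of $(x_J^{k_j},u_J^{k_j})$ in \eqref{eq:PJk} gives $g\big(x_J^{k_j},q^{(k_i)}(x_J^{k_j})\big)\ge 0$; letting $j\to\infty$ and using continuity of the \emph{fixed} map $q^{(k_i)}$ together with $x_J^{k_j}\to\hat{x}$ yields $g\big(\hat{x},q^{(k_i)}(\hat{x})\big)\ge 0$. Separately, since $q^{(k_i)}(x_J^{k_i})=\hat{u}_J^{k_i}\to\hat{u}$, the uniform continuity of the family $\{q^{(k)}\}$ at $\hat{x}$ gives $q^{(k_i)}(\hat{x})\to\hat{u}$ as $i\to\infty$. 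Combining the two limits, $g(\hat{x},\hat{u})=\lim_i g\big(\hat{x},q^{(k_i)}(\hat{x})\big)\ge 0$. Finally, $\hat{u}_J^{k_i}\in S(x_J^{k_i})$ means $g(x_J^{k_i},\hat{u}_J^{k_i})=v(x_J^{k_i})$, so passing to the limit with continuity of $g$ and of $v$ at $\hat{x}$ gives $g(\hat{x},\hat{u})=v(\hat{x})$, whence $v(\hat{x})\ge 0$.

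For the reverse inequality, each iterate satisfies $v(x_J^k)<0$ (otherwise the inner loop would have terminated at Step~2.2), so continuity of $v$ at $\hat{x}$ forces $v(\hat{x})\le 0$; hence $v(\hat{x})=0$. This already shows $\hat{u}\in S(\hat{x})$, since $\hat{u}\in U(\hat{x})$ (the graph of the polyhedral map $U$ is closed) and $g(\hat{x},\hat{u})=v(\hat{x})$. To certify feasibility of $\hat{x}$ for \eqref{eq:PJ}, I would invoke local boundedness of $U$ (\Cref{ass:overall}) to extract a convergent subsequence of the relaxation optimizers $u_J^k\to\bar{u}$; the description of $\mc{K}_J$ in \eqref{eq:Kjexp} consists of continuous equalities, inequalities and complementarities, so its graph is closed and $\bar{u}\in\mc{K}_J(\hat{x})$ with $g(\hat{x},\bar{u})\ge 0$. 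The remaining task is to show this limit parameter also realizes the worst case, i.e. $\bar{u}\in S(\hat{x})$ (equivalently $g(\hat{x},\bar{u})=0$, using $v(\hat{x})=0$), so that $(\hat{x},\bar{u})$ is feasible for \eqref{eq:PJ}; granting this, $f(\hat{x})\ge f_J^*$ and, with the bound above, $f(\hat{x})=f_J^*$, so $\hat{x}$ optimizes \eqref{eq:PJ}.

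The main obstacle is the double limit in the second paragraph: the cut $g\big(x,q^{(k_i)}(x)\big)\ge 0$ is produced at iteration $k_i$ by an extension map that varies with $i$, yet it must be read off at the single limit point $\hat{x}$ and matched to $\hat{u}$. The inequality $g\big(\hat{x},q^{(k_i)}(\hat{x})\big)\ge 0$ holds term-by-term by continuity of each individual $q^{(k_i)}$, but the identification $q^{(k_i)}(\hat{x})\to\hat{u}$ compares the maps at the two nearby arguments $x_J^{k_i}$ and $\hat{x}$ \emph{uniformly in} $i$, which is exactly what the equicontinuity hypothesis on $\{q^{(k)}\}$ supplies; without it the two limits cannot be interchanged. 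A secondary delicate point is the last step, namely guaranteeing that the extracted limit parameter lies in $\mc{K}_J(\hat{x})\cap S(\hat{x})$ so that $\hat{x}$ is genuinely feasible for the \emph{specific} branch \eqref{eq:PJ}; here closedness of the graph of $\mc{K}_J$ pins down the correct branch, and $v(\hat{x})=0$ must be used to upgrade $g(\hat{x},\bar{u})\ge 0$ to membership in $S(\hat{x})$.
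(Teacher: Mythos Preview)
Your argument for $v(\hat{x})\ge 0$ is essentially the paper's: both pass the accumulated cut constraints $g(x_J^{k},q^{(s)}(x_J^{k}))\ge 0$ (for $s<k$) to the limit in $k$ to obtain $g(\hat{x},q^{(s)}(\hat{x}))\ge 0$, then use the uniform continuity of the family $\{q^{(k)}\}$ at $\hat{x}$ together with $q^{(s)}(x_J^s)=\hat{u}_J^s$ to identify $\lim_s q^{(s)}(\hat{x})=\hat{u}$. The paper packages the last step as a telescoping inequality in $v$; your route through $g(\hat{x},\hat{u})=v(\hat{x})$ (via continuity of $v$ and of $g$) is the same in substance and arguably cleaner.

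You actually go beyond the paper on two points. First, you supply the missing inequality $v(\hat{x})\le 0$ (from $v(x_J^k)<0$ at every non-terminating step and continuity of $v$), which the statement asserts but the paper's proof never checks. Second, you correctly identify a gap that the paper simply skips: from $v(\hat{x})\ge 0$ the paper writes ``So $\hat{x}$ is feasible for \eqref{eq:PJ}'', but feasibility there requires a parameter in $\mc{K}_J(\hat{x})\cap S(\hat{x})$. Your candidate $\hat{u}$ lies in $S(\hat{x})$ but was the limit of $\hat{u}_J^k\in S(x_J^k)$, not of points in $\mc{K}_J(x_J^k)$, so $\hat{u}\in\mc{K}_J(\hat{x})$ is not justified; your alternative $\bar{u}=\lim u_J^k$ lies in $\mc{K}_J(\hat{x})$ by closedness of its graph, but you only have $g(\hat{x},\bar{u})\ge 0$, not $g(\hat{x},\bar{u})=v(\hat{x})$, so $\bar{u}\in S(\hat{x})$ is unproven. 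Your ``granting this'' is therefore honest: this step is not established in either argument, and the paper's proof shares the same lacuna rather than resolving it.
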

\begin{proof}
Without loss of generality, we may assume $(\hat{x}, \hat{v})$ is a limit point of $(x_J^k, v_J^k)$
up to a selection of subsequence.
Let $f_J^*$ denote the optimal value of \eqref{eq:PJ}.
Since $f$ is a polynomial and $f(x_J^k)\le f_J^*$ for each $k$, we have
\[
f(\hat{x})\, =\, \lim_{k\to \infty} f(x_J^k) \, \le\, f_J^*.
\] By feasibility, the optimizer $x_J^k$ must satisfy all the additional constraints added in the previous iterations.
For all $s\le k$, we have
\[ g(x_J^{k},q^{(s)}(x_J^{k}))\ge 0\quad \Rightarrow
\quad g(\hat{x}, q^{(s)}(\hat{x})) = \lim_{k\to \infty}g(x_J^{k},q^{(s)}(x_J^{k}))\ge 0. \]
Note that $q^{(s)}(x_J^s) = \hat{u}_J^s$ for each $s$ by \eqref{eq:feasext}.
Under the assumption that $q^{(k)}(x)$ is uniformly continuous at $\hat{x}$, we have
\[
\hat{u} = \lim_{s\to \infty} \hat{u}_J^{s} =
\lim_{s\to \infty} q^{(s)}(\hat{x}) =
\lim_{s\to\infty}q^{(s)}(x_J^{s}).
\]
Since $\hat{u}_J^s\in S(x_J^s)$, it holds that
$v(x_J^s) = g(x_J^s, \hat{u}_J^s) = g(x_J^s, q^{(s)}(x_J^s))$. Then
\[
\begin{aligned}
	v(\hat{x}) &= v(x_J^{s})+v(\hat{x})-v(x_J^{s})\\
	& \ge \big( g(x_J^{s},q^{(s)}(x_J^s))-g(\hat{x},q^{(s)}(\hat{x}))\big)+\big(v(\hat{x})-v(x_J^{s})\big),
\end{aligned}
\]
since $g(\hat{x}, q^{(s)}(\hat{x}))\ge 0$ as showed earlier.
When $s\to \infty$, $g(x_J^{s},q^{(s)}(x_J^s))\to g(\hat{x},\hat{u})$ by the uniform continuity
of $q^{(s)}$ and $v(x_J^{s})\to v(\hat{x})$ by the continuity of $v(x)$.
This implies $v(\hat{x})\ge 0$.
So $\hat{x}$ is feasible for \eqref{eq:PJ}, thus it is a global optimizer of \eqref{eq:PJ}.
\end{proof}

\subsection{Local and Global Optimality}\label{ssc:gloloc}
We analyze the local and global optimality of points computed from Algorithm~\ref{alg:1}.
\begin{theorem}\label{thm:glomin}
	In Algorithm~\ref{alg:1}, assume the inner loop terminates finitely
	for all $J\in \mc{P}$.
	Then $f^*\coloneqq \min\, \{\bar{f},\, f_J^*\,(J\in \mc{P})\}$
	is the global optimal value
	of \eqref{primal-GSIP} and the corresponding output $x^*$ is a global optimizer of \eqref{primal-GSIP}.
\end{theorem}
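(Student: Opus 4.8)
The plan is to show that $f^*$ coincides with the global optimal value of \eqref{primal-GSIP} by combining the feasible-set partition \eqref{eq:vfreform} with the KKT decomposition of \Cref{thm:kktdcp}, and then to check that the returned $x^*$ is feasible for the GSIP and attains $f^*$. Throughout I would use \Cref{ass:overall} and \Cref{lem:PJ}.

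First I would record that the feasible set of \eqref{primal-GSIP} is exactly the disjoint union $\{x\in X\,\vert\, U(x)=\emptyset\}\cup\{x\in X\,\vert\, v(x)\ge 0,\,U(x)\neq\emptyset\}$. When $U(x)=\emptyset$ the robust constraint holds vacuously; when $U(x)\neq\emptyset$, local boundedness makes $U(x)$ compact (closed polyhedron, bounded), so $v(x)$ is finite and attained, whence $g(x,u)\ge 0$ for all $u\in U(x)$ is equivalent to $v(x)\ge 0$. Consequently the GSIP optimal value equals the smaller of the optimal values of \eqref{eq:D1} and \eqref{eq:D2}. By \Cref{thm:D1equiv} the value of \eqref{eq:D1} is $\bar{f}$ (with the convention $\bar{f}=+\infty$ when \eqref{eq:D1equiv} is infeasible, consistent with Step~3), and the value of \eqref{eq:D2} equals that of \eqref{eq:D2equiv}.

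The core step is to identify the value of \eqref{eq:D2equiv} with $\min_{J\in\mc{P}} f_J^*$, where $f_J^*$ is the value of \eqref{eq:PJ}. Since $S(x)\subseteq\mc{K}(x)$ and $\mc{K}(x)=\bigcup_{J\in\mc{P}}\mc{K}_J(x)$ by \Cref{thm:kktdcp}, we get $S(x)=\bigcup_{J\in\mc{P}}\bigl(\mc{K}_J(x)\cap S(x)\bigr)$ for each $x\in X$. Hence the feasible set of \eqref{eq:D2equiv} is the union over $J\in\mc{P}$ of the feasible sets of \eqref{eq:PJ}, and minimizing $f$ over a union of sets gives $\min_{J\in\mc{P}} f_J^*$. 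This is a purely set-theoretic identity that does not invoke the algorithm. Combining with the previous paragraph yields that the GSIP value equals $\min\{\bar{f},\, f_J^*\,(J\in\mc{P})\}=f^*$.

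Finally, the finite-termination hypothesis together with \Cref{lem:PJ} guarantees that, for each $J\in\mc{P}$, the inner loop returns $f_{J,k}=f_J^*$ with an optimizer $(x_J^*,u_J^*)$ of \eqref{eq:PJ}; in particular $v(x_J^*)\ge 0$ at termination and $u_J^*\in\mc{K}_J(x_J^*)\subseteq U(x_J^*)$, so $U(x_J^*)\neq\emptyset$ and $x_J^*$ is feasible for \eqref{eq:D2}, hence for the GSIP. Likewise $\bar{x}$ is feasible for \eqref{eq:D1}, hence GSIP-feasible, whenever \eqref{eq:D1equiv} is solvable. Since the rule in Step~3 selects the branch attaining $f^*=\min\{\bar{f},\,f_J^*\,(J\in\mc{P})\}$, the point $x^*$ is GSIP-feasible with $f(x^*)=f^*$ equal to the global optimum, so $x^*$ is a global optimizer. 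The main obstacle is the core step: verifying that intersecting with $S(x)$ respects the decomposition $\mc{K}(x)=\bigcup_{J}\mc{K}_J(x)$, so that no feasible $x$ of \eqref{eq:D2} is lost across the branches $(P_J)$; the remainder is bookkeeping over the partition and an appeal to \Cref{lem:PJ}.
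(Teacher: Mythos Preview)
Your proposal is correct and follows essentially the same route as the paper: the paper's proof is a condensed version that invokes \Cref{thm:D1equiv} for the $\bar f$ branch, then \Cref{thm:kktdcp} together with \Cref{lem:PJ} to identify $\min_{J\in\mc{P}} f_J^*$ with the optimal value of \eqref{eq:D2}, and concludes via the equivalence of \eqref{primal-GSIP} and \eqref{eq:vfreform}. Your write-up simply makes explicit the set-theoretic identity $S(x)=\bigcup_{J\in\mc{P}}\bigl(\mc{K}_J(x)\cap S(x)\bigr)$ and the feasibility check for the returned $x^*$, which the paper leaves implicit.
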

\begin{proof}
By \Cref{thm:D1equiv}, $\bar{f}$ is the optimal value of \eqref{eq:D1}.
By \Cref{thm:kktdcp} and \Cref{lem:PJ}, the minimum of $f_J^*\, (J\in \mc{P})$
is the optimal value of \eqref{eq:D2}.
Then $f^*$ is the optimal value of \eqref{eq:vfreform}.
The conclusions hold since \eqref{primal-GSIP} and \eqref{eq:vfreform} are equivalent.
\end{proof}

In practice, one usually sets a maximal iteration number $\hat{k}$ for the inner loop of Algorithm~\ref{alg:1}.
The number of constraints of \eqref{eq:PJk} increases as the iteration number $k$ increases,
so it is more computationally expensive to solve \eqref{eq:PJk} when $k$ is large.
However, in our numerical experiments, the inner loop of Algorithm~\ref{alg:1} 
usually terminates within a few iterations.
For these instances, the problem \eqref{eq:PJk} is solved globally by Moment-SOS relaxations.
For more general cases where GSIPs are not defined by polynomials,
it is usually difficult to solve \eqref{eq:PJk}. We refer to \cite{HuKlepNie,OustryCerulli25} for how to solve this kind of SIPs.
Consider that all inner loops of Algorithm~\ref{alg:1} converge finitely within $\hat{k}$
loops except for a branch $\hat{J}$.
Let $f_{\hat{J}, \hat{k}}$ denote the terminated optimal value of this branch problem.
Suppose there exists another $J\in \mc{P}$ such that $f_J^*\le f_{\hat{J},\hat{k}}$.
Since $f_{\hat{J}, \hat{k}}\le f_{\hat{J}}^*$,
we can still obtain the optimal value of \eqref{primal-GSIP} by
\[ f^* =
\min \big\{\bar{f},\, f_{\hat{J},\hat{k}},\, f_J^*\, (J\in \mc{P}\setminus \{\hat{J}\})\big\} = \min\{\bar{f}, f_J^*\, (J\in \mc{P})\}.
\]

Algorithm~\ref{alg:1} typically produces multiple feasible points of \eqref{primal-GSIP}.
These points may be local minimizers for the original GSIP.
We give sufficient conditions to verify their local optimality.
For convenience, denote
\[ X=\{x\in\re^n\,\vert\, \bar{h}(x)\ge 0\},\]
where $\bar{h}$ is a given polynomial tuple.

\begin{theorem}
	Suppose $(\bar{x},\bar{y})$ is an optimizer \eqref{eq:D1equiv}.
	If $\bar{h}(\bar{x})>0$,
	then $\bar{x}$ is a local optimizer of \eqref{primal-GSIP}.
\end{theorem}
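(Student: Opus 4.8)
The plan is to exploit a structural feature of the certificate produced by \eqref{eq:D1equiv}: the homogeneous relation $A^T\bar{y}=0$ does not involve $x$, so the single multiplier $\bar{y}$ certifies emptiness of $U(x)$ not only at $\bar{x}$ but throughout a whole neighborhood. Together with the strict interiority condition $\bar{h}(\bar{x})>0$, this will show that near $\bar{x}$ the feasible set of \eqref{primal-GSIP} coincides with the first-branch feasible set $\{x\in X\mid U(x)=\emptyset\}$, over which $\bar{x}$ is already a global minimizer by \Cref{thm:D1equiv}. First I would record feasibility of $\bar{x}$ for \eqref{primal-GSIP}. Since $(\bar{x},\bar{y})$ is feasible for \eqref{eq:D1equiv}, we have $A^T\bar{y}=0$, $b(\bar{x})^T\bar{y}=1$, $\bar{y}\ge 0$, and $\bar{x}\in X$. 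By the theorem of alternatives \cite[Theorem~1.3.5]{NieBook} this forces $U(\bar{x})=\emptyset$, so the robust constraint holds vacuously and $\bar{x}$ is feasible for \eqref{primal-GSIP}.

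Next I would construct the neighborhood. The map $x\mapsto b(x)^T\bar{y}$ is a polynomial, hence continuous, and equals $1$ at $\bar{x}$; likewise each entry of $\bar{h}$ is continuous with $\bar{h}(\bar{x})>0$. Therefore there is some $\epsilon>0$ so that for every $x\in B_{\epsilon}(\bar{x})$ we have simultaneously $b(x)^T\bar{y}>0$ and $\bar{h}(x)>0$, the latter giving $x\in X$. For any such $x$, the rescaled vector $y'\coloneqq \bar{y}/\big(b(x)^T\bar{y}\big)$ satisfies $A^Ty'=0$, $b(x)^Ty'=1$, and $y'\ge 0$, so the theorem of alternatives again yields $U(x)=\emptyset$. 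Thus $B_{\epsilon}(\bar{x})\subseteq\{x\in X\mid U(x)=\emptyset\}$, and in particular every point of $B_{\epsilon}(\bar{x})$ is feasible for \eqref{primal-GSIP} with no second-branch (that is, $U(x)\neq\emptyset$) feasible points occurring nearby.

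Finally I would conclude. By \Cref{thm:D1equiv}, $\bar{x}$ is a global optimizer of \eqref{eq:D1}, i.e.\ $f(\bar{x})\le f(x)$ for every $x\in X$ with $U(x)=\emptyset$. Since the previous step places all of $B_{\epsilon}(\bar{x})$ inside this set, it follows that $f(\bar{x})\le f(x)$ for every $x\in B_{\epsilon}(\bar{x})$, and in particular for every feasible point of \eqref{primal-GSIP} lying in $B_{\epsilon}(\bar{x})$. Hence $\bar{x}$ is a local optimizer of \eqref{primal-GSIP}. The only step requiring genuine care is the uniform reuse of the single certificate $\bar{y}$ across the neighborhood; this is precisely where the $x$-independence of $A^T\bar{y}=0$ is essential, since only $b(x)^T\bar{y}$ varies with $x$ and it remains positive near $\bar{x}$. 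Beyond this observation I do not expect any serious obstacle.
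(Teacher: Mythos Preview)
Your proposal is correct and follows essentially the same route as the paper's proof: both use continuity of $\bar{h}$ and of $x\mapsto b(x)^T\bar{y}$ to produce a ball $B_{\epsilon}(\bar{x})$ on which the rescaled certificate $\bar{y}/\big(b(x)^T\bar{y}\big)$ remains feasible for \eqref{eq:D1equiv}, forcing $U(x)=\emptyset$ throughout the ball, and then invoke global optimality of $\bar{x}$ for \eqref{eq:D1} via \Cref{thm:D1equiv}. Your write-up is in fact a bit more explicit about why the single multiplier $\bar{y}$ can be reused (the $x$-independence of $A^T\bar{y}=0$), but the argument is the same.
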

\begin{proof}
By \Cref{thm:D1equiv}, \eqref{eq:D1} and \eqref{eq:D1equiv} are equivalent.
Given that $(\bar{x},\bar{y})$ is an optimizer of \eqref{eq:D1equiv},
$\bar{x}$ is a global minimizer of \eqref{eq:D1}.
By the given conditions and the feasibility of \eqref{eq:D1equiv},
the pair $(\bar{x}, \bar{y})$ satisfies
\begin{equation}\label{eq:barfeas}
	\bar{h}(\bar{x})>0,\quad A^T\bar{y} = 0,
	\quad b(\bar{x})^T\bar{y} = 1,\quad \bar{y}\ge 0.
\end{equation}
Since $\bar{h},\, b$ are polynomials, there exists a small $\epsilon>0$ such that
$\bar{h}(x)>0$ and $b(x)^T\bar{y}>0$ for every $x\in B_{\epsilon}(\bar{x})$.
Then each $x\in B_{\epsilon}(\bar{x})$ corresponds to a pair $(x, \bar{y}/b(x)^T\bar{y})$
that is feasible to \eqref{eq:D1equiv}.
The feasibility of $(x, \bar{y}/b(x)^T\bar{y})$ is easy to verify.
By \eqref{eq:barfeas}, we have
\[ x\in B_{\epsilon}(\bar{x})\subseteq X,\quad
A^T\frac{\bar{y}}{b(x)^T\bar{y}}= 0,\quad
b(x)^T \frac{\bar{y}}{b(x)^T\bar{y}} =1,\quad
\frac{\bar{y}}{b(x)^T\bar{y}}\ge 0. \]
This implies that $B_{\epsilon}(\bar{x})\subseteq \{x\in X\,\vert\, U(x)\not=0\}$. Since
\[
f(\bar{x}) = \min\limits_{x\in X, U(x)\not=\emptyset} f(x)
\le \min\limits_{x\in B_{\epsilon}(\bar{x})} f(x),
\]
we can conclude that $\bar{x}$ is a local optimizer of \eqref{primal-GSIP}
\end{proof}

Suppose $(x_J^*, u_J^*)$ is computed from the inner loop of Algorithm~\ref{alg:1}.
To verify the local optimality of $x_J^*$, we need to use the active set
\begin{equation}\label{eq:Aset}
	\mc{I}(x_J^*,u_J^*) \,\coloneqq\,\left\{j\in[m]\,\vert\, a_j^Tu_J^* = b_j(x_J^*) \right\}.
\end{equation}
\begin{theorem}\label{thm:locmin}
	For each $J\in \mc{P}$, suppose $(x_J^*,u_J^*)$ is an optimizer of \eqref{eq:PJ}.
	Assume $v(x_J^*) = g(x_J^*,u_J^*)$, $v(x)$ is continuous at $x_J^*$
	and there exists $\hat{u}\in\re^p$ such that $A\hat{u}>0$.
	Then $x_J^*$ is a local optimizer of \eqref{primal-GSIP} if one of the following conditions holds:
	\begin{enumerate}
		\item[(i)] $J\supseteq \mc{I}(x_J^*,u_J^*)$;
		\item[(ii)] $f_J^*\le f_{J'}^*$ holds for every $J'\in \mc{P}$ such that $J'\subseteq \mc{I}(x_J^*,u_J^*)$.
	\end{enumerate}
\end{theorem}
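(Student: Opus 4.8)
The plan is to argue by contradiction through a sequence of nearby robust-feasible points, showing that each such point is actually feasible for one of the branch problems \eqref{eq:PJ}, so its objective cannot drop below $f_J^*$. First I would extract the consequences of the standing hypotheses. By a theorem of the alternative, the solvability of $A\hat u>0$ rules out $A^Ty=0,\,y\ge0,\,y\ne0$, so $U(x)\ne\emptyset$ for every $x$; moreover $\hat u$ is a common strict-feasibility direction, so MFCQ holds at every point of every $U(x)$. Together with local boundedness this makes $v(x)$ finite and attained near $x_J^*$, so on a neighborhood the feasible set of \eqref{primal-GSIP} is exactly $\{x\in X:v(x)\ge0\}$. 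Since $(x_J^*,u_J^*)$ solves \eqref{eq:PJ}, we have $f(x_J^*)=f_J^*$ and $v(x_J^*)=g(x_J^*,u_J^*)\ge0$, so $x_J^*$ is itself feasible. If it were not a local minimizer, there would be $x_k\to x_J^*$ with $x_k\in X$, $v(x_k)\ge0$, and $f(x_k)<f_J^*$.

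For each $k$ I would pick $u_k\in S(x_k)$, so $g(x_k,u_k)=v(x_k)\ge0$. Local boundedness confines $\{u_k\}$ to a compact set, so along a subsequence $u_k\to\bar u$, and continuity of $v$ at $x_J^*$ yields $g(x_J^*,\bar u)=\lim v(x_k)=v(x_J^*)$, i.e.\ $\bar u\in S(x_J^*)$. Since constraints inactive at $(x_J^*,\bar u)$ remain strictly satisfied nearby, the active sets satisfy $\mc{I}(x_k,u_k)\subseteq\mc{I}(x_J^*,\bar u)$ for large $k$, and by MFCQ each $u_k$ carries KKT multipliers for \eqref{eq:vf} supported on $\mc{I}(x_k,u_k)$.

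Under condition (i) I would push these multipliers directly into $J$: as $\mc{I}(x_k,u_k)\subseteq\mc{I}(x_J^*,u_J^*)\subseteq J$, padding the multiplier with zeros on $J\setminus\mc{I}(x_k,u_k)$ solves $A_J^T\lambda_J=\nabla_u g(x_k,u_k)$ with $\lambda_J\ge0$ and complementarity (the inactive entries carry zero multipliers), so by uniqueness of the PLME \eqref{eq:plme} we get $u_k\in\mc{K}_J(x_k)$. Then $(x_k,u_k)$ is feasible for \eqref{eq:PJ}, giving $f(x_k)\ge f_J^*$, a contradiction. Under condition (ii) I would instead apply Carath\'eodory exactly as in the proof of \Cref{thm:kktdcp}, but selecting the basic indices \emph{within} the active set, producing $J'\in\mc{P}$ with $J'\subseteq\mc{I}(x_J^*,u_J^*)$ and $u_k\in\mc{K}_{J'}(x_k)$; then $(x_k,u_k)$ is feasible for the branch \eqref{eq:PJ} with index set $J'$, so $f_{J'}^*\le f(x_k)<f_J^*$, contradicting the assumption $f_J^*\le f_{J'}^*$.

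The delicate point, which I expect to be the main obstacle, is matching the limiting minimizer $\bar u$ to the specific optimizer $u_J^*$ that defines the active set $\mc{I}(x_J^*,u_J^*)$ appearing in both conditions. When $S(x_J^*)$ is not a singleton, the extracted $\bar u$ need not equal $u_J^*$, and the inclusion only reads $\mc{I}(x_k,u_k)\subseteq\mc{I}(x_J^*,\bar u)$ rather than $\subseteq\mc{I}(x_J^*,u_J^*)$. Closing this gap is precisely where I would lean on continuity of $v$ together with the upper semicontinuity of the solution map $S$ afforded by MFCQ and local boundedness, arguing that the relevant minimizers continue from $u_J^*$. A secondary subtlety for condition (ii) is a possible rank deficiency of $A_{\mc{I}(x_J^*,u_J^*)}$: if its rank is below $r$ there is no $J'\in\mc{P}$ inside the active set and the hypothesis in (ii) becomes vacuous, so one must verify that in that degenerate case the Carath\'eodory extension can still be kept among the active indices; I anticipate this is the only place where a nondegeneracy-type argument is genuinely needed.
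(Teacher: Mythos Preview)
Your proposal contains a genuine gap, and you have correctly located it: the limit $\bar u$ you extract from $u_k\in S(x_k)$ is only known to lie in $S(x_J^*)$, not to equal the particular optimizer $u_J^*$ whose active set $\mc{I}(x_J^*,u_J^*)$ appears in conditions (i) and (ii). Your suggested patch does not close it. What you call ``upper semicontinuity of $S$'' is exactly the property you already used to get $\bar u\in S(x_J^*)$; to force the minimizers to continue from $u_J^*$ you would need \emph{lower} semicontinuity of $S$ (a selection $u_k\in S(x_k)$ with $u_k\to u_J^*$), and this is not implied by MFCQ, local boundedness, or continuity of $v$. A simple counterexample: minimize $g(x,u)=-u^2+2xu$ over $U=[-1,1]$; then $S(0)=\{-1,1\}$, but for $x>0$ one has $S(x)=\{-1\}$, so if $u_J^*=1$ no sequence of minimizers converges to it. In that situation both your case-(i) and case-(ii) inclusions $\mc{I}(x_k,u_k)\subseteq\mc{I}(x_J^*,u_J^*)$ break down.

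The paper sidesteps the issue by arguing in the opposite direction. It first invokes \cite[Theorem~3.1]{nie2023plmes} to conclude that, under (i) or (ii), the pair $(x_J^*,u_J^*)$ is a local minimizer of the \emph{lifted} problem \eqref{eq:D2equiv} on a ball $B_{2\epsilon}(x_J^*,u_J^*)$; this is where the active-set hypotheses are actually consumed. Then, assuming a contradicting sequence $x^{(s)}\to x_J^*$ with $f(x^{(s)})<f(x_J^*)$, the local optimality in the lifted problem forces $S(x^{(s)})\cap B_\epsilon(u_J^*)=\emptyset$: any minimizer near $u_J^*$ would yield a feasible pair for \eqref{eq:D2equiv} beating $f(x_J^*)$. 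The hypothesis $A\hat u>0$ is used not (as you suggest) to rule out $U(x)=\emptyset$, but to produce parameters $u^{(s)}\in U(x^{(s)})\cap B_\epsilon(u_J^*)$; since these are \emph{not} minimizers, there is a gap $g(x^{(s)},u^{(s)})-v(x^{(s)})$, and passing to the limit with continuity of $v$ gives $v(x_J^*)<g(x_J^*,u_J^*)=v(x_J^*)$. So rather than trying to track minimizers toward $u_J^*$, the paper shows they must stay \emph{away} from $u_J^*$, and derives the contradiction from the presence of nearby feasible non-minimizers.
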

\begin{proof}
Let $\mc{F}$ denote the feasible set of \eqref{eq:D2equiv}.
If condition (i) or (ii) holds, then $(x_J^*,u_J^*)$ is a local optimizer of
\eqref{eq:D2equiv} by \cite[Theorem~3.1]{nie2023plmes}.
That is, there exists a small $\epsilon>0$ such that
\begin{equation}\label{eq:locmin}
	f(x_J^*)\le f(x)\quad \forall (x,u)\in B_{2\epsilon}(x_J^*, u_J^*)\cap\mc{F}.
\end{equation}
If $x_J^*$ is not a local minimizer of \eqref{primal-GSIP},
then there exists a sequence $\{x^{(s)}\}_{s=0}^{\infty}$ in the feasible set
of \eqref{eq:D2} such that $x^{(s)}\to x_J^*$ as $s\to \infty$ and
\[
f(x^{(s)})>f(x_J^*),\,\,\,\forall s\in \N.
\]
Without loss of generality, we may assume $\|\hat{u}\| = 1$ and each $x^{(s)}\in B_{\epsilon}(x_J^*)$.
Since the Cartesian product $B_{\epsilon}(x_J^*)\times B_{\epsilon} (u_J^*)\subseteq B_{2\epsilon}(x_J^*,u_J^*)$, by \eqref{eq:locmin}, we must have
\[
S(x^{(s)})\cap B_{\epsilon}(u_J^*)=\emptyset\quad \forall s\in\N.
\]
Fix $\theta>0$ such that $A\hat{u}-\theta \mathbf{1}>0$,
where $\mathbf{1}\in\re^p$ is the vector of all ones.
Since $b$ is a polynomial tuple and $u_J^*\in U(x_J^*)$, there exists $N>0$ such that
for each $s>N$,
\[
A(u_J^*+\epsilon\hat{u})-b(x^{(s)})>
Au_J^*-b(x^{(s)})+\epsilon\theta\mathbf{1}\ge
Au_J^*-b(x_J^*)\ge 0.
\]
This implies that $u_J^*+\epsilon \hat{u}\in U(x^{(s)})$,
thus $U(x^{(s)})\cap B_{\epsilon}(x_J^*)\not=\emptyset$.
Since $g$ is a polynomial, then we can find a small scalar $\epsilon_1>0$ and a sequence $\{u^{(s)}\}_{s=0}^{\infty}$ such that for each $s\ge N$,
\[ u^{(s)}\in B_{\epsilon}(u_J^*)\cap U(x^{(s)}),\quad \mbox{and}\]
\[
v(x^{(s)}) = \min\limits_{u\in U(x^{(s)})} g(x^{(s)},u)\le
g(x^{(s)}, u^{(s)})-\epsilon_1.
\]
Up to a proper selection of subsequence, we may assume
$(x^{(s)}, u^{(s)}) \to  (x_J^*, u_J^*)$ as $s\to \infty$ without loss of generality.
Since $v(x)$ is continuous at $x_J^*$, we have
\[
v(x_J^*) = \lim_{s\to \infty} v(x^{(s)})\le \lim_{s\to \infty} g(x^{(s)},u^{(s)})-\epsilon_1 < v(x_J^*), \]
which is a contradiction.
So $x_J^*$ is a local minimizer of \eqref{primal-GSIP}.
\end{proof}

In Theorem~\ref{thm:locmin}, the assumption that $A\hat{u}>0$
for some $\hat{u}\in \re^p$ is to ensure $U(x^{(s)})\cap B_{\epsilon}(x_J^*)$ is nonempty when $s$ is sufficiently large, for all sequences of $x^{(s)}$ converging to $x_J^*$. This condition can be replaced by the {\em inner semi-continuity} of the set-valued map $U$ at $x_J^*$:
for all $u\in U(x_J^*)$ and all sequences of $x^{(s)}$ converging to $x_J^*$,
there are points $u^{(s)}\in U(x^{(s)})$ with $u^{(s)}\to u$.
For $U(x)$ in form of \eqref{U(x)set}, Slater's condition is a sufficient condition for $U$ to be inner semi-continuous \cite[Lemma~3.2.2]{stein2003bi}.

\section{GSIPs with Multiple Robust Constraints}
\label{sc:convconca}
In this section, we generalize our disjunctive method for solving GSIPs
with multiple robust constraints.
In particular, we show that the method is computationally efficient for solving SIPs with convex or concave robust constraints.

\subsection{Extension of Algorithm~\ref{alg:1}}
Consider a GSIP of the form
\begin{equation}\label{eq:gsip_multi}
	\left\{\begin{array}{cl}
		\min\limits_{x\in X} & f(x)\\
		\st & g_i(x,u)\ge 0,\,\forall u\in U(x),\\
		& i=1,\ldots, s.
	\end{array}
	\right.
\end{equation}
Suppose it satisfies the following assumption.
\begin{ass}
	Each $g_i$ is a polynomial, $U(x)$ is a polyhedral parameter set
	as in \eqref{U(x)set} that is locally bounded on $X$.
\end{ass}
The GSIP \eqref{eq:gsip_multi} has the equivalent disjunctive reformulation: 
\[
\left\{\begin{array}{cl}
	\min\limits_{x\in X} & f(x)\\
	\st & \{x\,\vert\,U(x) = \emptyset \}\cup
	\Big\{x\,\big\vert\,\inf\limits_{i\in[s]}\, v_i(x)\ge 0, U(x)\neq \emptyset\Big\}.
\end{array}
\right.
\]
where each $v_i(x)$ is the value function of
\begin{equation}\label{eq:vi}
	\left\{\begin{array}{rl}
		v_i(x)\coloneqq \inf\limits_{u\in\re^p} &  g_i(x,u)\\
		\st & Au-b(x)\ge 0.
	\end{array}
	\right.
\end{equation}
Clearly, the branch problem of \eqref{eq:gsip_multi} with the feasible region $\{x\in X\,\vert\, U(x)=\emptyset\}$
is equivalent to the polynomial optimization \eqref{eq:D1equiv}.
Consider the other branch problem
\begin{equation}\label{eq:vi_branch}
	\left\{\begin{array}{cl}
		\min\limits_{x\in X} & f(x)\\
		\st & \inf\limits_{i\in[s]}\, v_i(x)\ge 0,\, U(x)\not=\emptyset.
	\end{array}\right.
\end{equation}
We can similarly approximate it with disjunctive KKT relaxations using partial Lagrange
multiplier expressions.
For each $i\in [s]$ and $J\in\mc{P}$, define the KKT sets of parameters
\begin{equation}\label{eq:Ki(x)}
	\mc{K}_i(x)\,\coloneqq\,\left\{ u\in\re^p \left|
	\exists \lambda\in\re^m\,\,\mbox{s.t.}
	\begin{array}{l}
		\nabla_u g_i(x,u) - A^T\lambda = 0,\\
		0\le Au-b(x)\perp \lambda\ge 0
	\end{array}
	\right.\right\},
\end{equation}
\begin{equation}\label{eq:K_Ji(x)}
	\mc{K}_{i,J}(x) \,\coloneqq\, \left\{ u\in U(x)\left|
	\begin{array}{l}
		\mbox{$\exists\lambda_{J} = (\lambda_j)_{j\in J}$}\quad\st\\
		\nabla_u g_i(x,u) - A_J^T\lambda_J = 0,\\
		0\le A_Ju-b_J(x)\perp \lambda_J\ge 0
	\end{array}\right.\right\}.
\end{equation}
By \Cref{prop:para}, each $\mc{K}_{i,J}(x)$ has an explicit expression by PLMEs.
By \Cref{thm:kktdcp}, we can similarly get the decomposition
\[\mc{K}_i(x) = \bigcup\limits_{J\in\mc{P}}\mc{K}_{i,J}(x),\quad \forall i\in[s].\]
Then \eqref{eq:vi_branch} has the following disjunctive KKT relaxation:
\begin{equation}\label{eq:kktrel_multi}
	\left\{\begin{array}{cl}
		\min\limits_{(x,{\bf u}_1,\ldots, {\bf u}_s)} & f(x)\\
		\st  & x\in X,\, {\bf u}_i = (u_{i,1},\ldots, u_{i,p}),\\
		& g_i(x,{\bf u}_i)\ge 0,\,\forall i\in[s],\\
		& {\bf u}_i\in \mc{K}_i(x) = \bigcup\limits_{J\in\mc{P}}\mc{K}_{i,J}(x).\\
	\end{array}
	\right.
\end{equation}
Let $S_i(x)$ be the optimizer set of \eqref{eq:vi} and denote
\[
\mc{P}^s \coloneqq \mc{P}\times \cdots \times \mc{P}\quad
\mbox{(Cartesian product of $s$ index sets)}.
\]
We summarize an algorithm for solving GSIPs with multiple robust constraints
based on the framework of Algorithm~\ref{alg:1}.
\begin{algorithm}\label{alg:2}
	For the GSIP \eqref{eq:gsip_multi}, do the following:
	\begin{description}
		
		\item[Step~1]
		Solve the optimization \eqref{eq:D1equiv} for the optimal value
		$\bar{f}$ and an optimizer $(\bar{x}, \bar{y})$.
		
		\item [Step~2]
		For each ${\bf J} = (J_1,\ldots, J_s)\in\mc{P}^s$, use the feasible
		extension method to solve
		\begin{equation}\label{eq:P_bfJk}
			\left\{\begin{array}{rl}
				f_{\bf J}^*\coloneqq \min\limits_{(x,{\bf u}_1,\ldots, {\bf u}_s)} & f(x)\\
				\st\quad & x\in X,\, {\bf u}_i\in\mc{K}_{i,J}(x)\cap S_i(x),\\
				& g_i(x,{\bf u}_i)\ge 0,\,\forall i\in[s],\\
			\end{array}
			\right.
		\end{equation}
		where the initial relaxation is the corresponding branch problem of \eqref{eq:kktrel_multi}.
		
		\item [Step~3]
		Output $f^*\coloneqq \min\, \{\bar{f},\, f_{\bf J}^*\,({\bf J}\in \mc{P}^s)\}$
		is the optimal value of \eqref{eq:kktrel_multi}.
	\end{description}
\end{algorithm}
We note that cardinality $|\mc{P}^s|$ grows exponentially with $s$.
When $s$ is large, it is time consuming to run Algorithm~\ref{alg:2} since the decision vector of \eqref{eq:PJk} has dimension $n+sp$.
On the other hand, the computation can be greatly simplified if we assume that
all but a few constraining functions $g_i$ are either convex or concave in $u$.
We discuss such special cases of SIPs in the following context.

\subsection{SIPs with Convex/Concave Robust Constraints}
Consider the SIP
\begin{equation}\label{eq:sip_multi}
	\left\{\begin{array}{cl}
		\min\limits_{x\in X} & f(x)\\
		\st & g_i(x,u)\ge 0,\,\forall u\in U,\\
		& i=1,\ldots, s,
	\end{array}
	\right.
\end{equation}
where $U = \{u\in\re^p\,\vert\, Au\ge b\}$ is a polytope (bounded polyhedral set) with a constant
vector $b = (b_1,\ldots, b_m)$.
For $x\in X$ and $i\in [s]$,
denote
\[
G_i(x)\coloneqq \{u\in\re^p\,\vert\, g_i(x,u)\ge 0\}.
\]
The $i$th robust constraint in \eqref{eq:sip_multi} is satisfied if and
only if $U$ is a subset of $G_i(x)$.
In particular, if $G_i(x)$ is a convex set, then $U\subseteq G_i(x)$
if and only if the vertex set of $U$ is a subset of $G_i(x)$.
For convenience, let $\Phi_0$ denote the vertex set of $U$ and write
\[\begin{array}{l}
	\mc{I}_1 = \{i\in[m]\,\vert\, \mbox{$g_i(x,u)$ is convex in $u$ for every $x\in X$}\},\\
	\mc{I}_2 = \{i\in[m]\,\vert\, \mbox{$-g_i(x,u)$ is convex in $u$ for every $x\in X$}\}.
\end{array}
\]
\begin{proposition}\label{prop:convconc}
	Assume $U$ is a polytope whose vertex set is $\Phi_0$.
	Suppose $|\mc{I}_1| = t\le s$
	and $\mc{I}_1\cup \mc{I}_2 = [s]$.
	Then the SIP \eqref{eq:sip_multi} is equivalent to
	\begin{equation}\label{eq:mconvgsip2}
		\left\{\begin{array}{cl}
			\min\limits_{(x,{\bf u}_1,\ldots,{\bf u}_t)} & f(x)\\
			\st & x\in X,\, {\bf u}_i\in \mc{K}_i(x),\\
			& g_i(x,{\bf u}_i)\ge 0,\, g_j(x,\phi)\ge 0,\\
			& \forall \phi\in \Phi_0,\, \forall i\in\mc{I}_1,\,\forall  j\in \mc{I}_2.
		\end{array}
		\right.
	\end{equation}
\end{proposition}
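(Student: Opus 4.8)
The plan is to reduce the SIP \eqref{eq:sip_multi} to a constraint on the value functions and then verify that \eqref{eq:mconvgsip2} has the same feasible set in $x$. Since $U$ is a fixed nonempty polytope, the emptiness branch $\{x : U(x)=\emptyset\}$ that appeared for GSIPs never occurs, so the $i$th robust constraint $g_i(x,u)\ge 0$ for all $u\in U$ is equivalent to $v_i(x)\ge 0$, where $v_i$ is the value function in \eqref{eq:vi} (the minimum is attained because $U$ is compact and $g_i$ is continuous). Thus the SIP is equivalent to minimizing $f(x)$ over $\{x\in X : v_i(x)\ge 0,\ i\in[s]\}$. Because the objective $f(x)$ in \eqref{eq:mconvgsip2} does not depend on the auxiliary variables $\mathbf{u}_i$, it suffices to show that the projection onto $x$ of the feasible set of \eqref{eq:mconvgsip2} equals this set; equivalence of the two programs then follows at once.

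First I would handle the concave constraints $j\in\mc{I}_2$. Since $-g_j$ is convex in $u$, the superlevel set $G_j(x)=\{u : g_j(x,u)\ge 0\}$ is convex, and the $j$th robust constraint is exactly $U\subseteq G_j(x)$. As $U$ is the convex hull of its vertex set $\Phi_0$, a convex set containing $\Phi_0$ contains all of $U$, while trivially $\Phi_0\subseteq U$. Hence $U\subseteq G_j(x)$ if and only if $g_j(x,\phi)\ge 0$ for every $\phi\in\Phi_0$, which is precisely the vertex condition imposed in \eqref{eq:mconvgsip2}. This is the same containment argument used in part (ii) of the Proposition preceding \Cref{thm:asym_conv}.

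Next I would treat the convex constraints $i\in\mc{I}_1$. For fixed $x$, minimizing the convex function $g_i(x,\cdot)$ over the compact polytope $U$ attains its minimum, and since the constraints defining $U$ are affine a minimizer satisfies the KKT system, so $S_i(x)\subseteq\mc{K}_i(x)$ and in particular $\mc{K}_i(x)\neq\emptyset$. Conversely, for a convex program with affine constraints every KKT point is a global minimizer, giving $\mc{K}_i(x)\subseteq S_i(x)$; hence $\mc{K}_i(x)=S_i(x)$. Consequently any $\mathbf{u}_i\in\mc{K}_i(x)$ satisfies $g_i(x,\mathbf{u}_i)=v_i(x)$, so the condition that there exists $\mathbf{u}_i\in\mc{K}_i(x)$ with $g_i(x,\mathbf{u}_i)\ge 0$ holds if and only if $v_i(x)\ge 0$. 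This matches the KKT block in \eqref{eq:mconvgsip2}, where $\mc{K}_i(x)=\bigcup_{J\in\mc{P}}\mc{K}_{i,J}(x)$ by \Cref{thm:kktdcp} and each $\mc{K}_{i,J}(x)$ is explicitly given by PLMEs through \Cref{prop:para}.

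Finally I would assemble the equivalence. Since $\mc{I}_1\cup\mc{I}_2=[s]$, every robust constraint is captured either by the KKT block (if $i\in\mc{I}_1$) or by the vertex block (if $i\in\mc{I}_2$); a constraint lying in both sets is merely captured redundantly and does not change the feasible set. For the forward direction, given an SIP-feasible $x$ one selects $\mathbf{u}_i\in S_i(x)=\mc{K}_i(x)$ for $i\in\mc{I}_1$ to build a feasible point of \eqref{eq:mconvgsip2}; for the backward direction, feasibility of \eqref{eq:mconvgsip2} yields $v_i(x)\ge 0$ for all $i$ by the two mechanisms above. Thus the feasible $x$-sets coincide and the two programs share the same optimal value and optimizers. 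The main obstacle is the convex step: making precise that a KKT point of the convex subproblem is automatically a global minimizer (so that $\mc{K}_i(x)=S_i(x)$) and that such a point exists by compactness of $U$; once these are established, the remaining argument is bookkeeping over the decomposition $\mc{I}_1\cup\mc{I}_2=[s]$.
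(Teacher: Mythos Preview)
Your proposal is correct and follows essentially the same approach as the paper's proof: for $i\in\mc{I}_1$ you use that convexity of $g_i(x,\cdot)$ forces $\mc{K}_i(x)=S_i(x)$, for $j\in\mc{I}_2$ you use that convexity of the superlevel set reduces the robust constraint to the finite vertex check $g_j(x,\phi)\ge 0$ for $\phi\in\Phi_0$, and you conclude via $\mc{I}_1\cup\mc{I}_2=[s]$. The paper's argument is the same, only more terse; your additional remarks on compactness of $U$ ensuring attainment and on the projection-onto-$x$ viewpoint simply make explicit what the paper leaves implicit.
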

\begin{proof}
Since $U$ is a polytope, a point $x\in X$ is feasible for \eqref{eq:sip_multi}
if and only if for each $i\in[s]$, there exists $u\in S_i(x)$ such that
$g_i(x,u)\ge 0$.
For every $i\in \mc{I}_1$, since $g_i$ is convex in $u$, we have
$\mc{K}_i(x) = S_i(x)$ for every $x\in X$,
where $S_i(x)$ is the optimizer set of \eqref{eq:vi}.
For every $j\in\mc{I}_2$, since $-g_j$ is convex in $u$,
the $G_j(x)$ is a convex set.
Then $U\subseteq G_j(x)$ if and only if the vertex set $\Phi_0\subseteq G_j(x)$,
which is equivalent to $g_j(x,\phi)\ge 0$ for every $\phi\in \Phi_0$.
Then the conclusion holds since $\mc{I}_1\cup \mc{I}_2 = [s]$.
\end{proof}

We remark that SIPs as in \eqref{eq:sip_multi} that satisfy the conditions
of \Cref{prop:convconc} have broad applications.
We refer to \Cref{sec:applications} for more details.

\section{Numerical Experiments}	
\label{sc:num}
In this section, we test the computational efficiency of Algorithm~\ref{alg:1} and Algorithm~\ref{alg:2}
on some numerical examples.
We implement algorithms using \texttt{MATLAB R2024a},
in a laptop with CPU 8th Generation Intel® Core™ Ultra 9 185H and RAM 32 GB.
Each involved polynomial optimization problem is solved globally by
Moment-SOS relaxations with \texttt{MATLAB} software {\tt GloptiPoly 3} \cite{GloPol3}
and {\tt Mosek} \cite{mosek}.
We report computed optimal solutions and values rounded to four decimal places.
In each problem, the constraints are ordered from left to right,
and from top to bottom.
The CPU time for computation is given with the unit ``second''.

For convenience, we present explicit feasible extensions for boxed and
simplex constraints \cite{HuNie23}. Let $(\hat{x}, \bar{u})$ a given pair such that
$\hat{x}\in X$ and $\bar{u}\in U(\hat{x})$.
\begin{itemize}
	\item (\emph{boxed constraints})
	Suppose $U(x) = \{u\in\re^p\,\vert\, l(x)\le u\le w(x)\}$
	for given $p$-dimensional polynomial vectors $l(x), w(x)$.
	For each $i\in [p]$, define
	\[
	q_i(x) = \frac{w_i(\hat{x})-\bar{u}_i}{w_i(\hat{x})l_i(\hat{x})} l_i(x)
	+\frac{\bar{u}_i-l_i(\hat{x})}{w_i(\hat{x})-l_i(\hat{x})}w_i(x).
	\]
	Then $q = (q_1,\ldots, q_p)$ satisfies conditions in Assumption~\ref{as:feasext}.
	\item (\emph{simplex constraints})
	Suppose $U(x) = \{u\in\re^p\,\vert\, l(x)\le u, e^Tu\le w(x)\}$,
	where $l(x)$ is a $p$-dimensional vector of polynomials and
	$w(x)$ is a scalar polynomial.
	For each $i\in [p]$, define
	\[
	q_i(x) = \frac{\bar{u}_i-l_i(\hat{x})}{w(\hat{x}-e^Tl(\hat{x}))}
	(w(x)-e^Tl(x))+l_i(x).
	\]
	Then $q = (q_1,\ldots, q_p)$ satisfies conditions in Assumption~\ref{as:feasext}.
\end{itemize}

First, we apply Algorithm~\ref{alg:1} to some interesting explicit examples.
\begin{example}\label{Example 6.3}
	Consider the SIP:
	\begin{equation*}
		\left\{
		\begin{array}{cl}
			\min\limits_{x\in\re^2} & (x_1-1)^2+x_1x^2_2 \\
			\st & x\in X=\{x\in\mathbb{R}^2\,\vert\, x\ge0,\, x_2-x_1\ge 0\},\\[5pt]
			&  g(x,u)= u^T\begin{bmatrix}
				x_1+x_2&x_1&x_2^2&2x_1+x_2\\
				x_1&-x_1+1&x_1^2&x_2-x_1\\
				x_2^2&x_1^2&x_2+2&x_1\\
				2x_1+x_2&x_2-x_1&x_1&x_1^2+x_2^2\\
			\end{bmatrix}u\geq  0,\\ [5pt]
			& \forall u\in U = \left\{ u\in\mathbb{R}^4\left\vert
			\begin{array}{c}
				u_1\geq 1, u_2\ge 0, u_3\ge 0, u_4\ge 0,\\
				u_1-2u_2\geq 0, u_3-u_4\geq 0
			\end{array}\right.\right\}.
		\end{array}
		\right.
	\end{equation*}
	The parameter set $U$ is unbounded.
	By applying Algorithm~\ref{alg:1}, we get the optimal value and the optimizer of  \eqref{Example 6.3}:
	\[ f^* = 0.3689,\quad x^* = (0.5486,0.5486). \]
	The corresponding parameter $u^* = (1.2320, 0.2807, 0.2405, 0.0085)$.
	This result is achieved at branches
	$J = \{1,2,3,4\},\, \{1,2,4,6\},\, \{2,3,4,5\},\,\{2,4,5,6\}$.
	It took around $5.12$ seconds.
\end{example}	

\begin{example}\label{ex:convconc}
	Consider the min-max optimization problem in \cite{PangWu2020}
	\begin{equation}\label{eq:min_max}
		\left\{\begin{array}{cl}
			\min\limits_{x\in\re^2} \max\limits_{u\in \re^2}  & 5x_1^2+5x_2^2-\|u\|^2+x_1(u_2-u_1+5)+x_2(u_1-u_2+3)\\
			\st  & \bbm 0.2-x_1^2-u_1^2\\ 0.1-x_2^2-u_2^2\ebm\ge 0\quad \forall u\in U=[-0.2,0.2]^2,\\
			& x\in X=[-100,100]^2.
		\end{array}
		\right.
	\end{equation}
	It is equivalent to the following SIP:
	\[
	\left\{\begin{array}{cl}
		\min\limits_{x\in\re^3} & x_3\\
		\st & -100\le x_1\le 100,\, -100\le x_2\le 100,\\
		& \bbm x_3-5x_1^2-5x_2^2+\|u\|^2-x_1(u_2-u_1+5)-x_2(u_1-u_2+3)\\ 0.2-x_1^2-u_1^2\\ 0.1-x_2^2-u_2^2
		\ebm \ge 0\\
		& \forall u\in U = [-0.2,0.2]^2.
	\end{array}
	\right.
	\]
	Let $g= (g_1,g_2,g_3)$ denote the above robust constraining tuple
	and let $\Phi_0$ denote the vertex set of $U$, i.e.,
	$\Phi_0 = \{(\pm 0.2,\pm 0.2)\}$.
	Since $g_1, -g_2, -g_3$ are convex in $u$ for every $x\in X$,
	by Proposition~\ref{prop:convconc}, this SIP is equivalent to
	\begin{equation}\label{eq:ex:convconc}
		\left\{\begin{array}{cl}
			\min\limits_{(x,u)} & x_3\\
			\st & 0.16-x_1^2\ge 0,\, 0.06-x_2^2\ge 0,\, x\in\re^3,\\
			& x_3-5x_1^2-5x_2^2+\|u\|^2-x_1(u_2-u_1+5)-x_2(u_1-u_2+3)\ge 0,\\
			& \forall u\in U = [-0.2,0.2]^2.
		\end{array}
		\right.
	\end{equation}
	Since $g_1(x,u)$ is convex in $u$ for every feasible $x$,
	\eqref{eq:ex:convconc} is equivalent to its KKT relaxation.
	By applying Algorithm~\ref{alg:1}, we get the optimal value and the optimizer of  \eqref{eq:ex:convconc}:
	$f^* = -1.6228$, $\hat{x}^* = (0.4000, -0.2449, -1.6228)$ with
	the corresponding parameter $u^*=(0.0775, -0.0775)$.
	This result is achieved at every branch $J\in\mc{P}$, where $|\mc{P}| = 4$.
	Then the optimal value and optimizer of \eqref{eq:min_max} are respectively
	\[ f^* = -1.6228,\quad x^* = (0.4000, -0.2449),\quad u^* = (0.0775, -0.0775).\]
	It took around $0.71$ second.
\end{example}	

\begin{example}
	Consider the GSIP from \cite{articleAngelos2015,ruckmann2001second}
	\begin{equation}\label{eq:nonlinearGsip}
		\left\{\begin{array}{cl}
			\min\limits_{x\in\re^2} & -0.5x_1^4+2x_1x_2-2x_1^2\\
			\st & x_1-x_1^2+x_2-u_1^2-u_2^2\ge 0\quad \forall u\in U(x),\\
			& U(x) = \{u\in\re^3\,\vert\, 0\le u\le e, x_1-\|u\|^2\ge 0\},\\
			& x\in X = [0,1]^2.
		\end{array}
		\right.
	\end{equation}
	In the above, the parameter set $U(x)$ is nonlinear in $u$,
	but it can be transformed into a polyhedral set.
	Make substitutions $z_1\coloneqq u_1^2,\, z_2\coloneqq u_2^2,\, u_3\coloneqq u_3^2$. Then \eqref{eq:nonlinearGsip} is equivalent
	to
	\[
	\left\{\begin{array}{cl}
		\min\limits_{x\in\re^2} & -0.5x_1^4+2x_1x_2-2x_1^2\\
		\st & g(x,z) = x_1-x_1^2+x_2-z_1-z_2\ge 0\quad \forall z\in Z(x), \\
		& Z(x) = \{z\in\re^3\,\vert\, z\ge 0, e-z\ge e, x_1-e^Tz\ge 0\},\\
		& x\in X = [0,1]^2,
	\end{array}
	\right.
	\]
	where $Z(x)$ is a polyhedron.
	For every feasible $x$, $g(x,z)$ is convex in $z$
	and $Z(x)$ is nonempty since $x_1\ge 0$.
	Then the GSIP is equivalent to its KKT relaxation.
	By applying Algorithm~\ref{alg:1}, we get the optimal value and optimizer of \eqref{eq:nonlinearGsip}:
	\[
	f^* = -0.5000, \quad x^* = (1.0000, 1.0000).
	\]
	This result is achieved at branches $J = \{1,5\}, \{2,5\}, \{3,5\}, \{4,5\}$.
	The corresponding parameter is $u^* = (0.5005, 0.4995)$.
	It runs around $1.06$ second. 			
\end{example}

\begin{example}\label{ex:convexGSIPnp34}
	Consider the GSIP:
	\begin{equation}\label{eq:convexGsipnp34}
		\left\{
		\begin{array}{cl}
			\min\limits_{x\in\re^3} & x^2_1-x_2x_3-x_2\\
			\st &  x_1\ge x_2\ge x_3-1,\, 9-\|x\|^2\ge 0,\\
			& g(x,u) = -x_1u_1+x_2u_2-x_3u_3+(e^Tx)^2\geq 0\\
			& \forall u\in U(x) = \{u\in\re^4\,\vert\, Au-b(x)\ge 0\},		
		\end{array}
		\right.
	\end{equation}
	\[
	A = \left[\begin{array}{rrrr}
		5 & 9 & 2 & -14 \\
		-15 & -13 & -18 & -20 \\
		13 & 1 & 7 & 3 \\
		-3 & 7 & 8 & 2 \\
		3 & -1 & 6 & 7 \\
		-20 & 17 & 4 & 2 \\
		1 & 2 & -3 & 13 \\
		11 & 8 & 2 & -3 \\
		-14 & 5 & -3 & -6 \\
		7 & 2 & 5 & -1
	\end{array}\right],\quad
	b(x) =
	10\begin{bmatrix}
		x_1 \\
		-x_1 - x_2 - x_3 \\
		x_2 + x_3 \\
		-0.5 \\
		-x_1 - x_2 \\
		2x_2 \\
		x_1 + x_3 \\
		2x_1 \\
		x_2 - 1 \\
		x_3 + 1
	\end{bmatrix}.
	\]
	By applying Algorithm~\ref{alg:1}, we solve this GSIP by its branch problems \eqref{eq:D1} and \eqref{eq:D2}.
	For the problem \eqref{eq:D1}, we get the optimal value and optimizer:
	$\bar{f} = -2.6667$ and $\bar{x} = (1.3333, 1.3333, 2.3333)$.
	The problem \eqref{eq:D2} is equivalent to the KKT relaxation \eqref{eq:D2rel},
	since $g(x,u)$ is convex in $u$ for every feasible $x$.
	We get the optimal value and optimizer of \eqref{eq:D2}:
	$f_J^* = -0.0018,\,x_J^* = (0.1400, 0.1400, -0.8473)$,
	with $u_J^* = (-0.0293, 0.1487, 0.1825, -0.5226)$.
	These results are achieved at branches $J = \{2,4,5,8\}$ and $J = \{2,5,6,10\}$.
	Since $\bar{f}<f_J^*$, the optimal value and optimizer
	of \eqref{eq:convexGsipnp34} are
	\[
	f^* = -2.6667,\quad x^* = (1.3333, 1.3333, 2.3333).
	\]		
	It took around $130.64$ seconds.
\end{example}

We next apply Algorithm~\ref{alg:2} to solve a GSIP with multiple robust constraints.
\begin{example}\label{ex:dcp7}
	Consider the GSIP from \cite{Schwientek}
	\begin{equation}\label{eq:dcp7}
		\left\{\begin{array}{cl}
			\min\limits_{x\in\re^4} & -(x_3-x_1)(x_4-x_2)\\
			\st & x_3-x_1\ge 10^{-6}, x_4-x_2\ge 10^{-6},\\
			& \bbm (u_1-2)^2+(u_2+0.5)^2-0.0625\\
			0.75-0.25u_1-u_2\\
			u_2^2+u_1\\
			1+u_2\ebm\ge 0,\\
			& \forall u\in U(x) = \{ u\in\re^2\,\vert\, x_1\le u_1\le x_3, x_2\le u_2\le x_4\}.
		\end{array}
		\right.
	\end{equation}
	Let $g = (g_1,g_2,g_3, g_4)$ denote the above robust constraining tuple.
	Since $U(x)$ is given by boxed constraints, we can explicitly write its vertex set
	\[
	\Psi_0(x) = \left\{ \bbm x_1\\x_2\ebm, \bbm x_3\\x_2\ebm,
	\bbm x_1\\x_4\ebm, \bbm x_3\\ x_4\ebm \right\}.
	\]
	Since $g_2,g_4$ are linear functions in $u$, by \Cref{prop:convconc},
	\eqref{eq:dcp7} is equivalent to
	\begin{equation}\label{eq:refdcp7}
		\left\{\begin{array}{cl}
			\min\limits_{(x,{\bf u}_1,{\bf u}_2)} & -(x_3-x_1)(x_4-x_2)\\
			\st & x\in \re^4,\, {\bf u}_1 = (u_{1,1}, u_{1,2}),\, {\bf u}_{2} = (u_{2,1}, u_{2,2}),\\
			& (u_{1,1}-2)^2+(u_{1,2}+0.5)^2-0.0625\ge 0,\, {\bf u}_1\in\mc{K}_1(x),\\
			& 	u_{2,2}^2+u_{2,1}\ge 0,\, {\bf u}_2\in \mc{K}_3(x),\\
			& 0.75-0.25 x_1-x_2 \ge 0, 0.75-0.25x_3-x_2\ge 0,\\
			& 0.75-0.25x_1-x_4\ge 0, 0.75-0.25x_3-x_4\ge 0\\
			& 1+x_2\ge 0, 1+x_4\ge 0,\\
			& 	x_3-x_1\ge 10^{-6}, x_4-x_2\ge 10^{-6}.
		\end{array}
		\right.
	\end{equation}
	where $\mc{K}_1(x),\mc{K}_3(x)$ are respectively the KKT sets for
	minimizing $g_1(x,u)$ and $g_3(x,u)$ on $U(x)$.
	By applying Algorithm~\ref{alg:2}, we get the optimal value and optimizer
	\[
	f^* = -2.3360, \quad x^* = (0.0057, -0.9892, 1.8407, 0.2876).
	\]
	This result is achieved at the branch $\mathbf{J} = (\{1,2\}, \{2,3\})$.
	The corresponding parameter is ${\bf u}_1^* = (0.0057,
	0.0015),\, {\bf u}_2^* = (1.8375, -0.4948)$.
	It runs around $230.73$ seconds. 			
\end{example}

In addition, we applied Algorithm~\ref{alg:1} to solve several existing SIPs and GSIPs in the literature.
The specific SIP and GSIP examples are documented in Appendix A and Appendix B, respectively. The corresponding computational results are presented in \Cref{tab:my_label}.
In the table, the integer tuple $(n,p,m)$ describes the dimension of the problem,
where $n$ is the dimension of the decision vector $x$, $p$ is the dimension of the parameter $u$,
and $m$ is the number of constraints for the parameter set $U(x)$.
The $|\mc{P}|$ denotes the number of branch problems for the KKT relaxation.
The third column of \Cref{tab:my_label} identifies the convexity of
$g$ with respect to the parameter $u$.
The $x^*$ is the computed global optimizer, where $u^*, y^*$ is the corresponding
parameter and auxiliary vector, respectively.
The $f^*$ is the computed global optimal value of the GSIP.
The ``time'' refers to the total CPU time for running Algorithm~\ref{alg:1},
which is counted for seconds.
\begin{table}[htb]
	\caption{Computational results for existing GSIPs in references}
	\label{tab:my_label}
	\vspace{.1 in} 
	\centering\tiny
	\begin{tabular}{c c c c l l}
		\specialrule{.1em}{0em}{0.1em}
		Problem &  $(n,p,m)$  & $|\mc{P}|$ & $g$ convex & $(x^*,u^*)$ or $(x^*,y^*)$ & $f^*$, Time\\ \midrule
		\Cref{Example6.1} & (3,2,4) & 4 & No & $\begin{array}{l}
			x^* = (-1.0000, 0.0000, 0.0001)\\
			u^* = (0.0000, 0.0000)\end{array}$ & $\begin{array}{l}
			f^* = 1.0000\\
			\mbox{Time:}\, 8.35s\end{array}$\\
		\midrule
		\Cref{ex:yang2016} & (2,1,2) & 2 & No & $\begin{array}{l}
			x^* = (0.0000, 0.0000)\\
			u^* = 0.0023
		\end{array}$  & $\begin{array}{l}
			f^* = 1.8913\cdot 10^{-9}\\
			\mbox{Time:}\, 1.87 s
		\end{array}$\\ \midrule
		\Cref{Watson7} & (3,2,4) & 4 & No & $\begin{array}{l}
			x^* = (-1.0000, 0.0000, 0.0000),\\
			u^* = (0.0000, 0.0000)
		\end{array}$ & $\begin{array}{l}
			f^* = 1.0000\\ \mbox{Time:}\,6.39s
		\end{array}$ \\ \midrule
		\Cref{Watson2} & (2,1,2) & 2 & No & $\begin{array}{l}
			x^* = (-0.7500, -0.6180),\\
			u^* = 0.2160 \end{array}$ & $\begin{array}{l}
			f^* = 0.1945\\
			\mbox{Time:}\,1.60s
		\end{array}$ \\ \midrule
		\Cref{Watson9} & (6,2,4) & 4 & No & $\begin{array}{l}
			x^* = (3.0000, 0.0000, 0.0000,\\
			\quad 0.0000, 0.0000, 0.0000),\\
			u^* = (0.0016, -0.0012)
		\end{array}$& $\begin{array}{l}
			f^* = -12.0000  \\
			\mbox{Time:}\, 6.38s
		\end{array}$\\ \midrule
		\Cref{ex:wangguo} & (2,1,2) & 2 & No & $\begin{array}{l}
			x^* = (0.0000, 0.0000)\\ u^* = 0.0000
		\end{array}$ & $\begin{array}{l}f^* = 1.0340\cdot 10^{-9}\\
			\mbox{Time} = 0.65 s
		\end{array}$ \\ \midrule
		\Cref{GlibP1}  & (2,1,2) & 2 & Yes & $\begin{array}{l}
			x^* = (0.0000, -1.0000)\\
			\quad \mbox{or}\,\,(-1.0000,0.0000)\\
			u^* = 0.0000\end{array}$ & $\begin{array}{l}
			f^* = -1.0000\\
			\mbox{Time:}\, 0.18s
		\end{array}$\\ \midrule
		\Cref{GlibP3}  & (2,1,4) & 4 & Yes & $\begin{array}{l}
			x^* = (-0.1909,2.0000)\\
			u^* = -2.1909
		\end{array}$ & $\begin{array}{l}
			f^* = 0.2500\\
			\mbox{Time:}\, 0.25s
		\end{array}$\\ \midrule
		\Cref{GlibP5}  & (2,2,2) & 1 & Yes & $\begin{array}{l}
			x^* = (0.0000, -1.0000)\\
			u^* = 0.0000
		\end{array}$ & $\begin{array}{l}
			f^* = 1.0000\\
			\mbox{Time:}\, 0.14s
		\end{array}$\\ \midrule
		\Cref{GlibP6} & (1,1,2) & 2 & Yes & $\begin{array}{l}
			x^* = -0.5000\\
			u^* = -1.2500\end{array}$ & $\begin{array}{l}
			f^* = -0.5000\\ \mbox{Time:}\,0.15s
		\end{array}$\\ \midrule
		\Cref{GlibP8}  & (5,1,2) & 2 & Yes & $\begin{array}{l}
			x^* = (0.331	,0.4118,0.5447\\
			\quad 0.8040,1.5348)\\
			u^* = 3.4219\end{array}$ & $\begin{array}{l}
			f^* = -3.7938\\ \mbox{Time:}\,0.31s
		\end{array}$\\ \midrule
		\Cref{GlibP10} & (3,1,2) & 2 & Yes & $\begin{array}{l}
			x^* = (0.0000, 0.0000,0.0000),\\ u^* = 0.0000
		\end{array}$ & $\begin{array}{l}
			f^* = 0.0625\\ \mbox{Time:}\, 0.20s
		\end{array}$\\ \midrule
		\Cref{GlibP16} & (2,1,3) & 3 & Yes & $\begin{array}{l}
			x^* = (0.5000,0.0000)\\ u^* = 0.0000
		\end{array}$ & $\begin{array}{l}
			f^* = -0.5000\\ \mbox{Time:}\,0.26s
		\end{array}$\\ \midrule
		\Cref{GlibP11}  & (2,1,3) &3  & Yes & $\begin{array}{l}
			x^* = (0.0000, -1.0000)\\
			u^* = -1.0000
		\end{array}$ & $\begin{array}{l}
			f^* = -1.0000\\ \mbox{Time:}\,0.17s
		\end{array}$\\ \midrule
		\Cref{GlibP15} & (2,3,10) & 66 & Yes & $\begin{array}{l}
			x^* = (0.0000,2.0000)\\
			u^* = (-0.3430,0.2798, 0.4665)
		\end{array}$ & $\begin{array}{l}
			f^* = -6.0000\\ \mbox{Time:}\,13.48s
		\end{array}$\\
		\specialrule{.1em}{0em}{0.1em}
	\end{tabular}
\end{table}

In our previous work \cite{HuNie23}, we introduced a feasible extension (FE) method to solve
polynomial GSIPs in form of \eqref{primal-GSIP}.
This method constructs a hierarchy of relaxations starting with:
\[	\min\limits_{x\in X}\quad f(x).
\]
This feasible extension method can also be applied to the conservative relaxation
\eqref{eq:genrel} without using KKT conditions.
For comparison, we implemented our Algorithm~\ref{alg:1} with the FE method in \cite{HuNie23}
and the conservative relaxation \eqref{eq:genrel} on SIPs in references.
The numerical results are reported in \Cref{tab:compare}.	
We use ``Iteration'' to denote the number of iterations required for each algorithm to terminate
and use $v(x^*)$ to represent the feasibility of the robust constraints at the computed minimizer $x^*$.
In particular, for Algorithm~\ref{alg:1}, 
we report the number of iterations and CPU time required to solve each branch problem \eqref{eq:PJ}.
It shows that computing a feasible candidate solution for a GSIP from a selected branch problem typically requires fewer iterations and less computational time compared to directly applying other feasible extension methods. However, the total CPU time for Algorithm~\ref{alg:1} increases rapidly with the number of branches.
Thus, a critical problem for future work is how
to reduce the number of branch problems required to certify global optimality.

\begin{table}[htb]
	\caption{Comparison of Algorithm~\ref{alg:1} with the other methods}
	\label{tab:compare}
	\vspace{.1 in}
	\centering\tiny
	\begin{tabular}{l c c c c c c}
		\specialrule{.1em}{0em}{0.1em}
		\multirow{2}{*}{Method}	& \multicolumn{3}{c}{\Cref{Example6.1}} & \multicolumn{3}{c}{\Cref{ex:yang2016}}\\
		\cmidrule(lr){2-4} \cmidrule(lr){5-7}
		& Iteration & Time (second) & $v(x^*)$ & Iteration & Time (second) & $v(x^*)$\\
		\midrule
		Alg.~\ref{alg:1} & $(0,0,2,2)$ & $\begin{array}{c}(0.26, 0.23,\\ 4.68, 3.17)\end{array}$ & $-7.36\cdot 10^{-8}$
		&  $(3,3)$ & $(1.09, 0.78)$ & $-4.02\cdot 10^{-14}$\\
		FE on \eqref{eq:genrel} & 1 & 0.98 & $-4.86\cdot 10^{-8}$
		& 3 & 0.73 & $-7.09\cdot 10^{-14}$\\
		Alg. \cite{HuNie23} & 2 & 0.93 & $-1.73\cdot 10^{-9}$
		& 3 & 0.82 & $-1.79\cdot 10^{-14}$\\
		\specialrule{.1em}{0em}{0.1em}
		\multirow{2}{*}{Method}	& \multicolumn{3}{c}{\Cref{Watson7}} & \multicolumn{3}{c}{\Cref{Watson2}}\\
		\cmidrule(lr){2-4} \cmidrule(lr){5-7}
		& Iteration & Time (second) & $v(x^*)$ & Iteration & Time (second) & $v(x^*)$\\
		\midrule
		Alg.~\ref{alg:1} & $(0,0,2,2)$ & $\begin{array}{c}(0.42,0.34,\\3.31,2.31)\end{array}$ & $-1.35\cdot 10^{-8}$
		& $(2,2)$ & $(0.57,1.03)$ & $-7.60\cdot 10^{-9}$\\
		FE on \eqref{eq:genrel} & 2 & 0.60 & $-9.58\cdot 10^{-9}$
		& 2 & 0.70 & $-6.64\cdot 10^{-9}$\\
		Alg. \cite{HuNie23} & 2 & 0.82 & $-6.58\cdot 10^{-9}$
		& 1 & 0.43 & $-1.16\cdot 10^{-10}$\\
		\specialrule{.1em}{0em}{0.1em}
		\multirow{2}{*}{Method}	& \multicolumn{3}{c}{\Cref{Watson9}} & \multicolumn{3}{c}{\Cref{ex:wangguo}}\\
		\cmidrule(lr){2-4} \cmidrule(lr){5-7}
		& Iteration & Time (second) & $v(x^*)$ & Iteration & Time (second) & $v(x^*)$\\
		\midrule
		Alg.~\ref{alg:1} & $(1,1,1,1)$ & $\begin{array}{c}(1.48,1.56,\\1.64,1.70)\end{array}$ & $1.53\cdot 10^{-8}$
		& $(0,0)$ & $(0.31,0.34)$ & $-1.03\cdot 10^{-9}$\\
		FE on \eqref{eq:genrel} & 7 & 1.90 & $3.05\cdot 10^{-8}$
		& 1 & 0.54 & $3.58\cdot 10^{-9}$\\
		Alg. \cite{HuNie23} & 15 & 4.54 & $-1.71\cdot 10^{-7}$
		& 1 & 0.43 & $-6.19\cdot 10^{-10}$\\
		\specialrule{.1em}{0em}{0.1em}
	\end{tabular}
\end{table}

\section{Applications}\label{sec:applications}
In this section, we demonstrate the applicability of our approach for
solving GSIPs through two motivating applications:
gemstone-cutting problems and robust safe control.

\subsection{Gemstone-Cutting Problem}\label{ssc:gemcutting}
Consider the problem of cutting a raw gemstone into a diamond shape with 9 facets.
Let $x= (x_1,x_2,x_3)$ denote the center of the diamond,
and let $x_0 \ge 0$ represent its size. The diamond shape is characterized by the
polyhedral set
\[
U(x,x_0) = \{u\in\re^3\,\vert\, A(u-x)-x_0 b\ge 0\},
\]
where $A\in\re^{9\times 3}$ is a given matrix and $b\in\re^9$ is a given vector.
It is easy to observe that the vertex set of $U(x,x_0)$ has a universal expression
in polynomials.
Let $\hat{\Phi}_0$ denote the vertex set of $U(0,1) = \{z\in\re^3\,\vert\, Az-b\ge 0\}$.
The vertex set of $U(x,x_0)$ can be conveniently represented by
\[
\Phi_0(x,x_0) = x+x_0\hat{\Phi}_0.
\]
Assume the raw gemstone's shape is given by $G = G_1\cap G_2$, where
\[
G_1 = \{u\in\re^3\,\vert\, g(u)\ge 0\},\quad
G_2 = \{u\in\re^3\,\vert\, Bu-d\ge 0\},
\]
for a convex scalar polynomial $g$, a matrix $B$ and a vector $d$.
The problem of finding the largest diamond size can be formulated as the following GSIP:
\begin{equation}\label{eq:gemstoneSIP}
	\left\{
	\begin{array}{cl}
		\min\limits_{(x,x_0)} & -x_0 \\
		\st & g(u)\geq 0\quad  \forall u\in U(x,x_0),\\
		& B(x+x_0u)-d\ge 0\quad\forall u\in \Phi_0(x,x_0),\\
		& x\in\re^3,\,\, x_0\in\re.
	\end{array}
	\right.
\end{equation}
We next apply our approach to solve the gemstone-cutting problem in \cite{nguyen1992computing,winterfeld2008application}.
\begin{example}
	\label{PLME_18}\rm
	Consider the GSIP \eqref{eq:gemstoneSIP} has
	\[
	A = \left[\begin{array}{rrr}
		0 & -8 & 3 \\
		-8 & 0 & 3 \\
		0 & 8 & 3 \\
		8 & 0 & 3 \\
		0 & -5 & -1 \\
		-5 & 0 & -1 \\
		0 & 5 & -1 \\
		5 & 0 & -1 \\
		0 & 0 & -1
	\end{array}\right],\,
	b = \begin{bmatrix}
		-12 \\
		-12 \\
		-12 \\
		-12 \\
		-7.5 \\
		-7.5 \\
		-7.5 \\
		-7.5 \\
		-0.5
	\end{bmatrix},\,
	B=\left[\begin{array}{rrr}
		0 & -1 & 0\\
		0 & 0 & -1\\
		1 & 0  & 0\\
		0 & 1  & 0\\
		-1 & 0 &  0\\
		0 & 0 & 1\\
		1 & -5 & 0\\
		0 & -2 & -1\\
		-1 & -10 &  0\\
		0 & -16 & -1
	\end{array}\right],\,
	d=\begin{bmatrix}
		-3\\
		-5\\
		-2\\
		-2\\
		-3\\
		-6\\
		-12\\
		-11\\
		-29\\
		-42
	\end{bmatrix}.
	\]
	It is easy to compute the vertex set of $U(0,1)$:
	\[ \hat{\Phi}_0 = \left\{
	\begin{bmatrix}
		\pm 1.5\\\pm 1.5\\0
	\end{bmatrix},\begin{bmatrix}
		\pm 1.4\\\pm 1.4\\0.5
	\end{bmatrix},\begin{bmatrix}
		0\\0\\-4
	\end{bmatrix}
	\right\}.\]
	
	(i)
	First, we consider the raw gemstone is a perfect polytope without surface irregularities,
	i.e. $g$ does not exist, thus $G_1 = \mathbb{R}^3$ and $G = G_2$.
	Then the GSIP \eqref{eq:gemstoneSIP} is reduced to a deterministic polynomial
	optimization problem by \Cref{prop:convconc}.
	One can solve for the global optimal value and optimizer
	\[
	f^* = -1.3889,\quad  x^*= (0.9167, 0.0833, 0.4468),\quad x_0^* = 1.3889.
	\]
	It took around 0.09 second.
	
	(ii) Suppose $G_1$ is determined by
	\[ g(u) = (u_1+2)^2+(u_2-3)^2+(u_3+3)^2-6\ge 0. \]
	Since $g$ is convex in $u$, we can apply Algorithm~\ref{alg:1} to solve \eqref{eq:gemstoneSIP}.
	There are total $|\mc{P}| = 64$ branch problems, where $24$ of them are infeasible.
	At $J = \{1, 4, 5\}$, the optimal value of the corresponding branch problem as in \eqref{eq:PJ}
	is the smallest among all feasible branch problems.
	We obtain the optimal value and optimizers:
	\[\begin{aligned}
		f^* = -1.3887,\quad  x^* = (0.9164, 0.0832, -0.1124),\\
		x_0^* = 1.3887,\quad  u^* = (-1.2364, 1.2364, -2.2578).
	\end{aligned}\]
	It took around 13.57 seconds.
	
	(iii) Suppose $G_1$ is determined by
	\[ g(u) = u_1^2+u_2^2+u_3^2-1\ge 0. \]
	Since $g$ is convex in $u$, we can apply Algorithm~\ref{alg:1} to solve \eqref{eq:gemstoneSIP}.
	There are total $|\mc{P}| = 64$ branch problems, all of them are feasible.
	At $J = \{4, 5, 9\}$, the optimal value of the corresponding branch problem as in \eqref{eq:PJ}
	is the smallest among all feasible branch problems.
	We obtain the the optimal value and optimizers:
	\[\begin{aligned}
		f^* = -1.1099,\quad  x^* = (0.1122,0.1284,-1.4954),\\
		x_0^* = 1.1099,\quad  u^* =  -0.1000,-0.1310,0.5138).
	\end{aligned}\]
	It took around 618.01 seconds. For each case, we plot the raw gemstone and the diamond
	in the following figure.
	\begin{figure}[htb!]
		\centering
		\includegraphics[width=0.45 \textwidth]{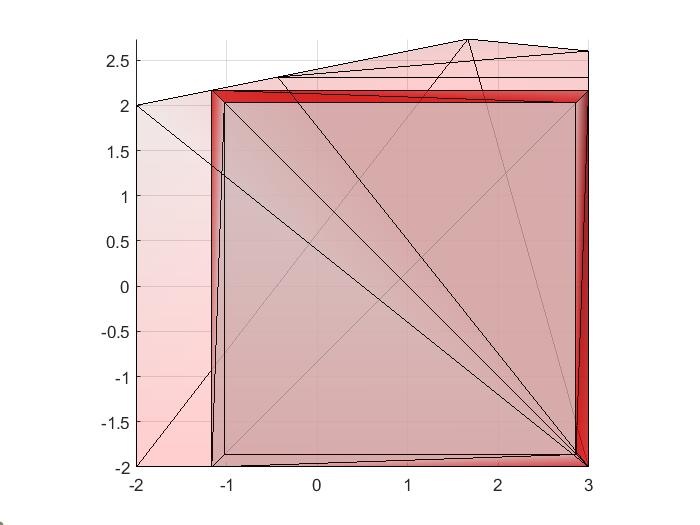}
		\hfill
		\includegraphics[width=0.45 \textwidth]{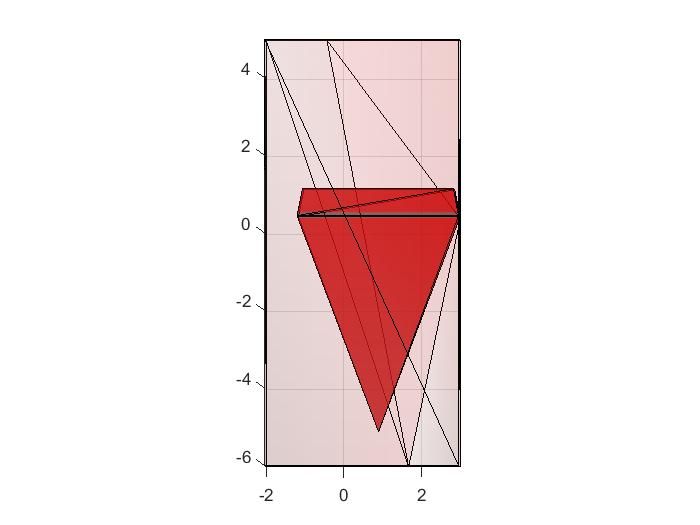}
		\hfill
		\includegraphics[width=0.45 \textwidth]{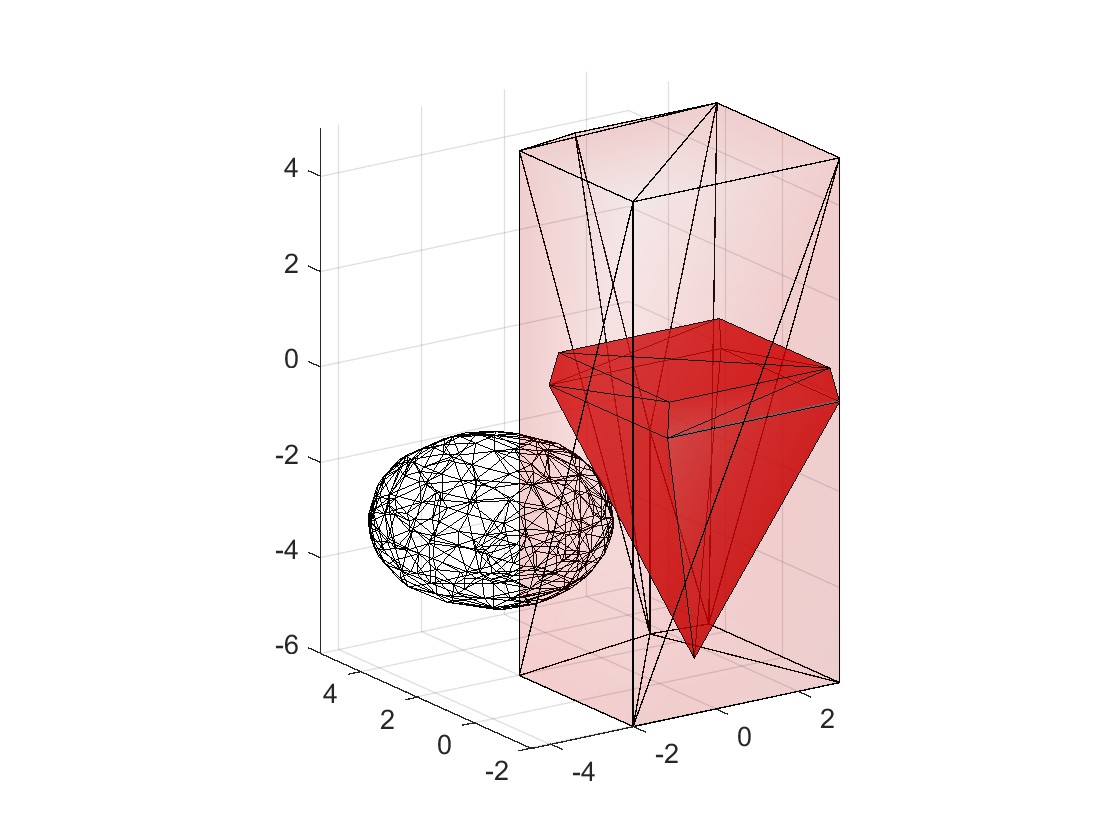}\hfill
		\includegraphics[width=0.45 \textwidth]{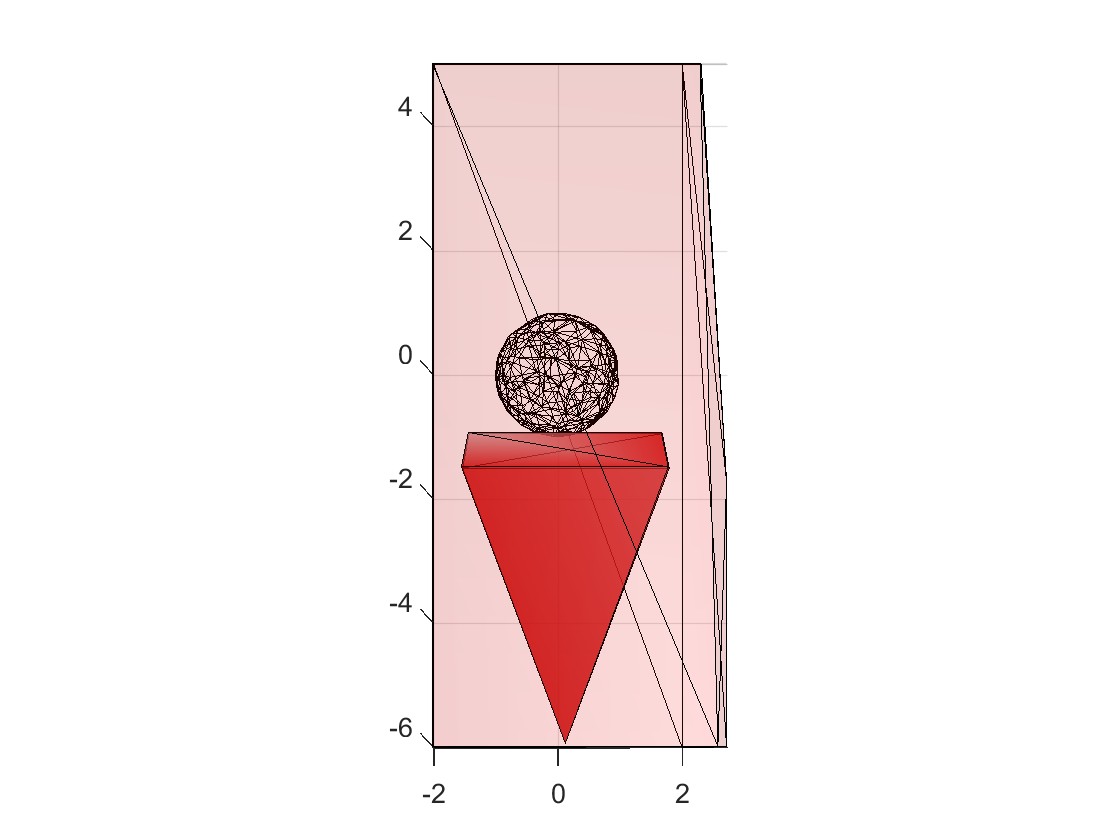}
		\caption{Graph illustration for \Cref{PLME_18}.
			(left-top) the top view for case (i); (right-top) the front view for case (i);
			(left-bottom) the front view for case (ii), where the ball describes the region for $g(u)\le 0$;
			(right-bottom) the front view for case (iii), where the ball describes the region for $g(u)\le 0$.}
		\label{fig2:diamond}
	\end{figure}
\end{example}

\subsection{Robust Safe Control}
Consider a 2-dimensional dynamic system
\[
\mathbf{w}_{t+1} = \mathbf{w}_t+\mathbf{x}_t+\mathbf{u}_t,
\]
where $\mathbf{w}_t = (w_{t,1},w_{t,2})$ is the state vector, $\mathbf{x}_t=(x_{t,1}, x_{t,2})$ is the control input and
$\mathbf{u}_t=(u_{t,1}, u_{t,2})$ is the bounded disturbance at time step $t$.
Assume that each $\mathbf{x}_t\in X,\,\mathbf{u}_t\in U$
for some given constraining sets $X,U\subseteq \re^2$.
The system is required to avoid obstacles by satisfying constraints:
\[ \hat{g}(\mathbf{w}_t)\ge 0\quad \forall t =0,1,\ldots, T-1.\]
Then the problem to find the best control towards $\mathbf{w}_{ref}$ under the
worst-case disturbance via $T$ discrete time steps can be formulated as
\[ \left\{
\begin{array}{cl}
	\min\limits_{(\mathbf{x},\mathbf{w})} \max\limits_{\mathbf{u}} & f(\mathbf{x},\mathbf{w}) \\
	\st & \mathbf{w}_{t+1} = \mathbf{w}_t+\mathbf{x}_t+\mathbf{u}_t,\\
	&  \hat{g}(\mathbf{w}_t)\ge 0,\, \mathbf{x}_t\in X,\,\mathbf{u}_t\in U,\\
	& \forall t = 0,1,\ldots, T-1.
\end{array}\right. \]
By introducing a new variable $\gamma$, the previous minimax problem
can be reformulated into the following SIP:
\begin{equation}\label{eq:rsc_sip}
	\left\{\begin{array}{cl}
		\max\limits_{(\gamma,\mathbf{x},\mathbf{w},\mathbf{u})} & \gamma\\
		\st & \mathbf{x} = (\mathbf{x}_0,\mathbf{x}_1,\ldots, \mathbf{x}_{T-1})\in X^{T},\\
		& \mathbf{w} = (\mathbf{w}_0,\mathbf{w}_1,\ldots, \mathbf{w}_{T}),\\
		& \mathbf{w}_t = \mathbf{w}_0+ \sum\limits_{i = 0}^{t-1} \mathbf{x}_i+\mathbf{u}_i\quad \forall t\in[T],\\
		& \bbm \gamma-f(\mathbf{x}, \mathbf{w}) \\ \hat{g}(\bf{w}_t)\ebm \ge 0\quad
		\forall \mathbf{u} = (\mathbf{u}_0,\mathbf{u}_1,\ldots, \mathbf{u}_{T-1})\in U^T.
	\end{array}
	\right.
\end{equation}
We next apply our approach to solve a concrete robust safe control problem.
\begin{example}
	For the robust safe control problem, consider
	\[\begin{array}{l}
		T = 1,\quad \mathbf{w}_0 = (-2,0),\quad \mathbf{w}_{ref} = (1.5,-0.5),\\
		X = [-1,1]^2,\quad U = [-0.1,0.1]\times [-0.2,0.2],\\
		f(\mathbf{x},\mathbf{w}) = 0.05\sum\limits_{t=0}^{T-1} \|\mathbf{x}_t\|^2+\|\mathbf{w}_T-\mathbf{w}_{ref}\|^2,\\
		\hat{g}(\mathbf{w}_t) = (w_{t,1}^2+w_{t,2}^2-2, w_{t,1}+w_{t,2}+3).
	\end{array}\]
	For convenience, write $x = \mathbf{x}_0, u = \mathbf{u}_0$.
	Then the SIP \eqref{eq:rsc_sip} becomes
	\begin{equation}\label{eq:robcon}
		\left\{\begin{array}{cl}
			\min\limits_{(\gamma,x)} & \gamma\\
			\st & g = \bbm \gamma-0.05 \|x\|^2-\|u+x +\mathbf{w}_0- \mathbf{w}_{ref}\|^2\\
			(-2+x_{1}+u_{1})^2+(x_{2}+u_{2})^2-2\\
			1+x_{1}+u_{1}+x_{2}+u_{2}\ebm\ge 0 \\
			&  \forall u\in U = [-0.1,0.1]\times [-0.2,0.2] ,\\
			& x\in X = [-1,1]\times [-1,1],\quad \gamma\in \re.
		\end{array}
		\right.
	\end{equation}
	Let $g = (g_1,g_2,g_3)$ denote the robust constraining tuple.
	It is easy to verify that $g_1, -g_2, g_3$ are all convex in $u$ for every $x\in X$.
	The uncertainty set $U$ has the vertex set
	\[ \Phi_0 = \left\{\bbm -0.1\\-0.2\ebm, \left[\begin{array}{r}-0.1\\0.2\end{array}\right],
	\left[\begin{array}{r}0.1\\-0.2\end{array}\right], \bbm 0.1\\0.2\ebm
	\right\}. \]
	By applying Algorithm~\ref{alg:1}, we get the optimal value and optimizer of \eqref{eq:robcon}:
	\[
	\gamma^* = 8.7820, \quad x^* = (0.7338, -10000).
	\]
	This result is achieved at the branch $J = \{3,4\}$.
	The corresponding parameter is $u^* = (0.1000,0.2000)$.
	It runs around 0.40 second. 	
\end{example}

\section{Conclusions}\label{sc:con}
In this paper, we presented a novel framework for solving polynomial GSIPs
with polyhedral parameter sets.
Our approach transforms such GSIPs into a sequence of disjunctive relaxations.
This transformation leverages the KKT conditions of the robust constraints
and the relaxation hierarchy is built with the feasible extension methods.
We provide an explicit representation of the KKT set $\mc{K}(x)$ through
partial Lagrange multiplier expressions (PLMEs).
Specifically, PLMEs help to decompose $\mc{K}(x)$ into structured components,
where each component has a convenient representation solely in terms of $(x,u)$.
This enables faster convergence rate compared to other conservative relaxations that
do not use KKT conditions and PLMEs.
We summarized our approach into a semidefinite algorithm and studied its convergence properties.
For cases where the algorithm exhibits finite convergence,
we gave an analysis for verifying the global/local optimality of computed points.
Numerical experiments are given to show the efficiency of our approach,
which includes applications in gemstone-cutting and robust safe control.

	\medskip \noindent
	{\bf Acknowledgement}
	This research is partially supported by the NSF grant DMS-2110780.

		\appendix
		\section{Some SIP problems from references}
		\label{sc:appen}

		\begin{example} \label{Example6.1} Consider the SIP from \cite{coope1985projected,wang2014semidefinite}
			\begin{equation*}
				\left\{
				\begin{array}{cl}
					\min\limits_{x\in\re^3} & x^2_1+x^2_2+x^2_3 \\
					\st & -x_1(u_1+u^2_2+1)-x_2(u_1u_2-u^2_2)-x_3(u_1u_2+u^2_2+u_2)-1\geq 0,\\
					&\forall u\in U=[0,1]^2.
				\end{array}
				\right.
			\end{equation*}
		\end{example}

		\begin{example}\label{ex:yang2016}
			Consider the SIP from \cite{yang2016optimality}
			with a slight modification:
			\begin{equation*}
				\left\{
				\begin{array}{lll}
					\min\limits_{x\in \re^2}&x_1 \\
					\st & -x_1u-x_2u^3\geq 0\quad\forall u\in U=[-1,1],\\
					& x\in X=[-10,10]^2.
				\end{array}
				\right.
			\end{equation*}
		\end{example}
		
		\begin{example}\label{Watson7}
			Consider the SIP from \cite{coope1985projected}:
			\[
			\left\{\begin{array}{cl}
				\min\limits_{x\in\re^3} & x_1^2+x_2^2+x_3^2\\
				\st & -1-x_1(u_1+u_2^2+1)-x_2(u_1u_2-u_2^2)-x_3(u_1u_2+u_2^2+u_2)\ge 0,\\
				& \forall u \in U = [0,1]^2.
			\end{array}
			\right.
			\]
		\end{example}
		
		\begin{example}\label{Watson2}
			Consider the SIP from \cite{coope1985projected}:
			\[
			\left\{\begin{array}{cl}
				\min\limits_{x\in\re^2} & \frac{1}{3}x_1^2+x_2^2+\frac{1}{2}x_1\\
				\st & x_1u^2+x_2^2-x_2-(1-x_1^2u^2)^2\ge 0,\\
				& \forall u\in U = [0,1].
			\end{array}
			\right.
			\]
		\end{example}

		\begin{example}\label{Watson9}
			Consider the SIP from \cite{coope1985projected} with a slight modification:
			\[
			\left\{\begin{array}{cl}
				\min\limits_{x\in\re^6} & -4x_1-\frac{2}{3}(x_4+x_6)\\
				\st & x\in [-10,10]^6,\\
				& 3+(u_1-u_2)^2(u_1+u_2)^2-x_1-x_2u_1-x_3u_2-x_4u_1^2\\
				& \quad -x_5u_1u_2-x_6u_2^2\ge 0\quad \forall u\in U = [-1,1]^2.
			\end{array}
			\right.
			\]
		\end{example}
		
		\begin{example}\label{ex:wangguo}
			Consider the SIP from \cite{wang2014semidefinite}
			\[\left\{\begin{array}{cl}
				\min\limits_{x\in \re^2} & x_2\\
				\st & -2x_1^2u^2+u^4-x_1^2+x_2\ge 0\\
				& \forall u\in U = [-1,1].
			\end{array}
			\right.
			\]
		\end{example}
		
		\section{Some GSIP problems from references}
		\label{sc:gsip_exp}
		\begin{example}\label{GlibP1}
			Consider the GSIP from \cite{ruckmannStein2001}
			\[
			\left\{\begin{array}{cl}
				\min\limits_{x\in\re^2} & x_1+x_2\\
				\st & u\ge 0\quad \forall u\in U(x),\\
				& -1\le x_1\le 1,\, -1\le x_2\le 1,\\
				& U(x) = \{u\in\re\,\vert\, u\ge x_1,\,u\ge x_2\}.
			\end{array}
			\right.
			\]
		\end{example}
		
		\begin{example}\label{GlibP3}
			Consider the GSIP from \cite{Selassie04}
			\[
			\left\{\begin{array}{cl}
				\min\limits_{x\in\re^2} & \left( x_1+\frac{1}{2}-\frac{1}{1+\sqrt{5}} \right)^2 + (x_2-2.5)^2\\
				\st & x_1-x_2-u\ge 0\quad \forall u\in U(x),\\
				& -5\le x_1\le 5,\, -5\le x_2\le 5,\\
				& U(x) = \{u\in\re\,\vert\, 2u+x_2+3\ge 0,\, x_1-u-2\ge 0,\, -5\le u\le 5\}.
			\end{array}
			\right.
			\]
		\end{example}
		
		\begin{example}\label{GlibP5}
			Consider the GSIP from \cite{ruckmannStein2001}
			\[
			\left\{\begin{array}{ll}
				\min\limits_{x\in\re^2} & x_1^2+x_2^2\\
				\st & (u_1-x_1)^2+(u_2-x_2)^2-1\ge 0\quad \forall u\in U(x),\\
				& U(x) = \{u\in\re^2\,\vert\, u_1-x_1\ge 0,\, u_2\ge 0\}.
			\end{array}
			\right.
			\]
		\end{example}
		
		\begin{example}\label{GlibP6}
			Consider the GSIP from \cite{Still2001}
			\[
			\left\{\begin{array}{ll}
				\min\limits_{x\in\re} & x\\
				\st & u+x+1.75\ge 0\quad \forall u\in U(x),\\
				& -1\le x\le 1,\\
				& U(x) = \{u\in\re\,\vert\, -1- x^2\le u\le 1+x^2\}.
			\end{array}
			\right.
			\]
		\end{example}
		
		\begin{example}\label{GlibP8}
			Consider the GSIP from \cite{SSGSIPLib}
			\[
			\left\{\begin{array}{ll}
				\min\limits_{x\in\re^m} & \sum\limits_{i=1}^m \left(\frac{3(m-i+1)}{m}x_i^2-2x_i\right)\\
				\st & 7-\|x\|^2-u\ge 0\quad \forall u\in U(x),\\
				& U(x) = \left\{u\in\re\left|
				-100\le u\le \sum\limits_{i=1}^m \frac{3i}{m}x_i^2-6 \right.\right\}.
			\end{array}
			\right.
			\]
			We take $m=5$ in this example.
		\end{example}
		
		\begin{example}\label{GlibP10}
			Consider the GSIP from \cite{JongenRS98}
			\[
			\left\{\begin{array}{ll}
				\min\limits_{x\in\re^2} & (x_1-\frac{1}{4})^2+x_2^2\\
				\st & x_2-u\ge 0\quad \forall u\in U(x),\\
				& -1\le x_1\le 1,\, -1\le x_2\le 1,\\
				& U(x) = \{u\in\re\,\vert\, x_1-u^2\ge 0,\, -1\le u\le 1\}.
			\end{array}
			\right.
			\]
			When $x_1 < 0$, the set $U(x)$ is empty.
			We solve this GSIP in two cases:
			
			\noindent
			Case I: If $x\in X\cap \{x_1\leq 0\}$, then let $x_1:=x^2_3$ with $x_3\geq 0$ and this GSIP becomes:
			\[
			\left\{\begin{array}{ll}
				\min\limits_{x\in\re^3} & (x_1-\frac{1}{4})^2+x_2^2\\
				\st & x_2-u\ge 0\quad \forall u\in U(x),\\
				& 0\le x_1\le 1,\, -1\le x_2\le 1,x_1=x^2_3, x_3\geq 0\\
				& U(x) = \{u\in\re: -x_3\leq u\leq x_3\}.
			\end{array}
			\right.
			\]
			the computational results are shown in
			Table~\ref{tab:my_label}.

			\noindent
			Case II: If $x\in X\cap \{x_1<0\}$, then $U(x)=\emptyset $
			and the GSIP is equivalent to
			\begin{equation*}
				\left\{
				\begin{array}{lll}
					\min& (x_1-\frac{1}{4})^2+x_2^2\\
					\st & x\in X\cap \{x_1< 0\}.
				\end{array}
				\right.
			\end{equation*}
			When this polynomial optimization is solved, the strict inequality
			$x_1<0$ is treated as $x_1\leq  0$.
			Solving this case gives
			$\tilde{x} \approx  (0.0000,0.0000), f(\tilde{x}) \approx 0.0625$.
			The minimizer $\tilde{x}$ is the same as the one for Case I.
			The constraint $x_1\leq  0$ is active at $\tilde{x}$.
			By comparison with Case I, $\tilde{x}$ is also not a global minimizer for the GSIP.
		\end{example}

		\begin{example}\label{GlibP16}
			Consider the GSIP from \cite{vasquez2005}
			\[
			\left\{\begin{array}{ll}
				\min\limits_{x\in\re^2} & -x_1\\
				\st & u^5-3x_2^2\ge 0\quad \forall u\in U(x),\\
				& 0\le x_1\le 1,\, 0\le x_2\le 1,\\
				& U(x) = \{u\in\re\,\vert\, u^5+4x_1^2+x_2^2-1\ge 0,\, -2\le u\le 0\}.
			\end{array}
			\right.
			\]
			Make substitution $y\coloneqq u^5$, then \Cref{GlibP16} becomes:
			\[
			\left\{
			\begin{array}{lll}
				\min& -x_1\\
				\st & y-3x^2_2\geq 0\quad\forall y\in Y(x),\\
				& x\in X,
			\end{array}
			\right.
			\]
			where the constraining sets are
			\[
			\begin{array}{ll}
				X=[0,1]^2, \quad
				Y(x)=\left\{y\in\mathbb{R} \,\vert\,
				\begin{array}{lll}
					-32\leq y\leq 0, 1-4x^2_1-x^2_2\leq y
				\end{array}\right\}.
			\end{array}
			\]
		\end{example}
		
		\begin{example}\label{GlibP11}
			Consider the GSIP from \cite{JongenRS98}
			\[
			\left\{\begin{array}{ll}
				\min\limits_{x\in\re^2} & x_2\\
				\st & u^3-x_2\ge 0\quad \forall u\in U(x),\\
				& -1\le x_1\le 1,\, -1\le x_2\le 1,\\
				& U(x) = \{u\in\re\,\vert\, u^3+x_1^2-2x_2\ge 0,\, -1\le u\le 0\}.
			\end{array}
			\right.
			\]
			Make substitution $y:=u^3$, then this GSIP becomes:
			\[
			\left\{\begin{array}{ll}
				\min\limits_{x\in\re^2} & x_2\\
				\st & y-x_2\ge 0\quad \forall y\in Y(x),\\
				& -1\le x_1\le 1,\, -1\le x_2\le 1,\\
				& Y(x) = \{y\in\re: y+x_1^2-2x_2\ge 0,\, -1\le y\le 0\}.
			\end{array}
			\right.
			\]
		\end{example}

		\begin{example}\label{GlibP15}
			Consider the GSIP from \cite{ruckmann2001second}
			\[
			\left\{\begin{array}{ll}
				\min\limits_{x\in\re^2} & 4x_1^2-x_2-x_2^2\\
				\st & u_2-x_2\ge 0\quad \forall u\in U(x),\\
				& -3\le x_1\le 2,\, -3\le x_2\le 2,\\
				& U(x) = \left\{ u\in\re^3\left|\begin{array}{l}
					u_3-(u_1+u_2)^2\ge 0,\, x_1\ge u_1,\, x_1 \ge u_2,\\
					-4\le u_1\le 4,\, -4\le u_2\le 4,\, 0\le u_3\le 16
				\end{array}\right\}.
				\right.\end{array}
			\right.
			\]
			Make substitution $y^2:=u_3,\,y\geq 0$ then the set $U(x)$ in \Cref{GlibP15} becomes:
			\[	 \left\{ (u_1,u_2,y)\in\re^3\left|\begin{array}{l}
				-y\leq u_1+u_2\leq y,\, x_1\ge u_1,\, x_1 \ge u_2,\\
				-4\le u_1\le 4,\, -4\le u_2\le 4,\, 0\le y\le 4
			\end{array}\right\}.
			\right.
			\]
		\end{example}



\begin{thebibliography}{plain}
			
			\bibitem{mosek}
			ApS, M.:
			{The MOSEK optimization Toolbox for MATLAB manual. Version
				9.0.} (2019)
			
			\bibitem{Aubin}
			Aubin,~J-P., Frankowska, H.:
			{Set-Valued Analysis,}
			Birkh\"{a}user, Boston (2009)
			
			\bibitem{BhattachCP76}
			Blankenship, J.~W., Falk, J.~E.:
			{Infinitely constrained optimization problems.}
			{J. optim. Theory Appl.} 19, 261-281 (1976)
			
			
			\bibitem{Cerulli22}
			Cerulli, M., Oustry, A., D'Ambrosio, C., Liberti, L.:
			Convergent algorithms for a class of convex semi-infinite programs.
			{SIAM J. Optim.} 32, 2493-2526 (2022)
			
			\bibitem{chang2005optimization}
			Chang,~Y., Sahinidis,~N.:
			Optimization of metabolic pathways under stability considerations.
			{Comput. Chem. Eng.} 29, 467-479 (2005)
			
			\bibitem{choinietang24}
			Choi,~J., Nie,~J., Tang,~X., Zhong,~S.:
			Generalized Nash equilibrium problems with quasi-linear constraints.
			{Preprint.} (2024) \url{arXiv:2405.03926}
			
			\bibitem{coope1985projected}
			Coope,~I., Watson,~G.:
			A projected Lagrangian algorithm for semi-infinite programming.
			{Math. Program.} 32, 337-356 (1985)
			
			\bibitem{EggenStein25}
			Eggen, C., Stein, O., Volkwein, S.:
			Granularity for mixed-integer polynomial optimization problems.
			{J. Optim. Theory Appl.} 205, 22 (2025).
			
			\bibitem{goerigk2020data}
			Goerigk,~M., Kurtz,~J.:
			Data-driven robust optimization using deep neural networks.
			{Comput. Oper. Res.} 151, 106087. (2023)
			
			\bibitem{GuoRIC}
			Guo, L., Lin. G, Ye J.~J., Zhang, J,
			Sesitivity analysis of the value function for parametric mathematical programs with equilibrium constraints.
			{SIAM J. Optim.} 24, 1206-1237 (2014)
			
			\bibitem{GloPol3}
			Henrion,~H., Lasserre,~J.~B., L\"{o}fberg,~L.:
			Gloptipoly 3: moments, optimization and semidefinite programming.
			{Optim. Meth. Softw.} 24, 761-779  (2009)
			
			\bibitem{Hettichdis}
			Hettich.~R.:
			An implementation of discretization method for semi-infinite programming.
			Math. Program. 34, 354-361 (1986)
			
			\bibitem{Hettichsip}
			Hettich.~R., Kortanek,~K.:
			Semi-infinite programming: theory, methods, and applications,
			SIAM Rev. 35, 380-429 (1993)
			
			\bibitem{HuKlepNie}
			Hu, X., Klep, J., Nie, J.:
			Positivstellens\"{a}tze and moment problems with universal quantifiers.
			{Math. Oper. Res.} (2025)
			\url{https://doi.org/10.1287/moor.2024.0402}
			
			\bibitem{HuNie23}
			Hu,~X., Nie,~J.:
			Polynomial optimization relaxations for generalized semi-infinite programs.
			{Math. Prog. Comp.} (2025)
			\url{https://doi.org/10.1007/s12532-025-00280-4}
			
			
			\bibitem{JongenRS98}
			Jongen,~H.~T., R\"{u}ckmann, J., Stein,~O.:
			Generalized semi-infinite optimization: a first order optimality condition and examples.
			{Math. Program.} 83, 145-158 (1998)
			
			
			\bibitem{kufer2008semi}
			K\"{u}fer,~K., Stein,~O., Winterfeld,~A.:
			Semi-infinite optimization meets industry: a deterministic approach to gemstone cutting.
			{SIAM News} 41, 66 (2008)
			
			\bibitem{Las01}
			Lasserre,~J.~B.:
			Global optimization with polynomials and the problem of moments.
			{SIAM J. Optim.} 11, 796-817 (2001)
			
			
			\bibitem{articleAngelos2015}
			Mitsos,~A., Tsoukalas,~A.:
			Global optimization of generalized semi-infinite programs via restriction
			of the right hand side.
			{J. Global Optim.} 61, 1-17 (2015)
			
			\bibitem{nie2013certifying}
			Nie,~J.:
			Certifying convergence of Lasserre's hierarchy via flat truncation.
			{Math. Program.} 142, 485–510 (2013)
			
			\bibitem{nie2019tight}
			Nie,~J.:
			Tight relaxations for polynomial optimization and Lagrange multiplier expressions.
			{Math. Program.} 178, 1–37 (2019)
			
			\bibitem{NieBook}
			Nie,~J.:
			{Polynomial and Moment Optimization},
			SIAM, Philadelphia (2023)
			
			\bibitem{nietang23cgnep}
			Nie.~J., Tang,~X.:
			Convex generalized Nash equilibrium problems and polynomial optimization.
			{Math. Program.} {198}, 1485-1518 (2023)
			
			\bibitem{nie2023rational}
			Nie,~J., Tang,~X., Zhong,~S.:
			Rational generalized Nash equilibrium problems.
			{SIAM J. Optim.} 33, 1587-1620 (2023)
			
			\bibitem{nie2023distributionally}
			Nie,~J., Yang,~L., Zhong,~S., Zhou,~G.:
			Distributionally robust optimization with moment ambiguity sets.
			{J. Sci. Comput.} 94, 12 (2023)
			
			\bibitem{nie2023plmes}
			Nie,~J., Ye,~J.~J., Zhong,~S.:
			PLMEs and disjunctive decompositions for bilevel optimization.
			{Preprint} (2023) \url{arXiv:2304.00695}
			
			\bibitem{nguyen1992computing}
			Nguyen,~V., Strodiot,~J.:
			Computing a global optimal solution to a design centering problem.
			{Math. Program.} 53, 111-123 (1992)
			
			
			\bibitem{OkunoFukushima23}
			Okuno, T., Fukushima, M.:
			Primal-dual path following method for nonlinear semi-infinite programs with semi-definite constraints.
			{Math. Program.} 199, 251-303 (2023)
			
			\bibitem{OustryCerulli25}
			Oustry,~A., Cerulli, M.:
			Convex semi-infinite programming algorithms with inexact separation oracles.
			{Optim. Lett.}, 19, 437-462 (2025)
			
			\bibitem{PangWu2020}
			Pang, L., Wu, Q., Wang, J., Wu, Q.:
			A discretization algorithm for nonsmooth convex semi-infinite programming problems based on bundle methods.
			{Comput. Optim. Appl.} {76}, 125-153 (2020)
			
			
			\bibitem{putinar}
			Putinar, M.:
			Positive polynomials on compact semi-algebraic sets.
			{Indiana Univ. Math. J.} {42}, 969-984 (1993)
			
			\bibitem{royset2003adaptive}
			Royset,~J., Polak,~E., Kiureghian,~A.:
			Adaptive approximations and exact penalization for the solution of generalized semi-infinite min-max problems.
			{SIAM J. Optim.} 14, 1-34 (2003)
			
			
			\bibitem{ruckmann2001second}
			R\"{u}ckmann,~J., Shapiro,~A.:
			Second-order optimality conditions in generalized semi-infinite programming.
			{Set-Valued Anal.} 9, 169-186 (2001)
			
			\bibitem{ruckmannStein2001}
			R\"{u}ckmann,~J., Stein,~O.:
			On linear an linearized generalized semi-infinite optimization problems.
			{Ann. Oper. Res.} 101, 191-208 (2001)
			
			\bibitem{Schwientek}
			Schwientek,~S.:
			Modellierung und L\"{o}sung parametrischer Packungsprobleme mittels semi-infiniter
			Optimierung - Angewandt auf die Verwertung von Edelsteinen.
			erschienen im Fraunhofer-Verlag, Stuttgart (2013)
			
			\bibitem{Selassie04}
			Selassie,~A.~G.~W.:
			A coarse solution of generalized semi-infinite optimization problems via robust
			analysis of marginal functions and global optimization.
			(2005) \url{https://api.semanticscholar.org/CorpusID:170526589}
			
			\bibitem{SSGSIPLib}
			Seidel,~T, Schwientek,~J.:
			GSIPLib\&Gen: a library and generator of general semi-infinite programming test problems (Version 1.0).
			\url{www.itwm.fraunhofer.de/en/departments/optimization/products-and-services/gsip-lib-and-gen.html}
			
			
			\bibitem{sra2012optimization}
			Sra,~S., Nowozin,~S., Wright,~S.:
			{Optimization for Machine Learning},
			The MIT Press (2011)
			\url{https://doi.org/10.7551/mitpress/8996.001.0001}
			
			\bibitem{stein2003bi}
			Stein,~O.:
			{Bi-level Strategies In Semi-Infinite Programming},
			Springer, New York (2003)
			
			\bibitem{SteinBook24}
			Stein, O.:
			{\em Basic Concepts of Global Optimization},
			Springer, Berlin (2024)
			
			
			\bibitem{SteinStill02}
			Stein, O., Still, G.: (2002).
			On generalized semi-infinite optimization and bilevel optimization. 
			{Eur. J. Oper. Res.}, 142, 444-462 (2002)
			
			\bibitem{stein2012adaptive}
			Stein,~O., Steuermann,~P.:
			The adaptive convexification algorithm for semi-infinite programming with arbitrary index sets.
			{Math. Program.} 136, 183-207 (2012)
			
			
			
			\bibitem{steinwinterfeld10}
			Stein,~O., Winterfeld, A.:
			Feasible method for generalized semi-infinite programming.
			{J. Optim. Theory Appl.} {146}, 419-443 (2010)
			
			\bibitem{Still2001}
			Still,~G.:
			Generalized semi-infinite programming: numerical aspects.
			{Optim.}, 49, 223-242 (2001)
			
			\bibitem{sturm1999using}
			Sturm,~J.:
			Using SeDuMi 1.02, a MATLAB toolbox for optimization over symmetric cones.
			Optim. Meth. Softw. 11, 625-653 (1999)
			
			\bibitem{vasquez2005}
			V\'{a}zquez,~F.~G., R\"{u}ckmann,~J.:
			Extensions of the Kuhnb-Tucker constraint qualification to generalized semi-infinite programnming.
			{SIAM J. Optim.} 15, 926-937 (2005)
			
			\bibitem{vazquez2008generalized}
			V\'{a}zquez,~F.~G., R\"{u}ckmann,~J., Stein,~O., Still,~G.:
			Generalized semi-infinite programming: a tutorial.
			{J. Comput. Appl. Math.} 217, 394-419 (2008).
			
			
			\bibitem{wang2014semidefinite}
			Wang,~L., Guo,~F.:
			Semidefinite relaxations for semi-infinite polynomial programming.
			{Comput. Optim. Appl.} 58, 133-159 (2014)
			
			
			\bibitem{weber2007generalized}
			Weber,~G., Tezel~\"{O}zturan,~A.:
			On generalized semi-infinite optimization of genetic networks.
			{Top} {15}, 65-77 (2007)
			
			\bibitem{WehbSip24}
			Wehbeh,~J., Kerrigan,~E.~C.:
			Semi-infinite programs for robust control and optimization:
			efficient solutions and extensions to existence constraints.
			{IFAC-PapersOnLine}, 58, 317-322 (2024)
			
			\bibitem{wei2022persistently}
			Wei,~T., Kang,~S., Zhao,~W., Liu,~C.:
			Persistently feasible robust safe control by safety index synthesis and convex semi-infinite programming.
			{IEEE Control Syst. Lett.} 7, 1213-1218 (2022)
			
			\bibitem{winterfeld2008application}
			Winterfeld,~A.:
			Application of general semi-infinite programming to lapidary cutting problems.
			{Eur. J. Oper. Res.}, 191, 838-854 (2008)
			
			\bibitem{xu2009robustness}
			Xu,~H., Caramanis,~C., Mannor,~S.:
			Robustness and regularization of support vector machines.
			{JMLR} 10 (2009)
			
			\bibitem{yang2016optimality}
			Yang,~X., Chen,~Z., Zhou,~J.:
			Optimality conditions for semi-infinite and generalized semi-infinite programs via lower order exact penalty functions.
			J. Optim. Theory Appl. 169, 984-1012 (2016)
			
			
		\end{thebibliography}
	\end{document}